\def\ps@pprintTitle{%
 \let\@oddhead\@empty
 \let\@evenhead\@empty
 \def\@oddfoot{}%
 \let\@evenfoot\@oddfoot}
\theoremstyle{plain}
\declaretheorem[name=Proposition, numberwithin=section]{proposition}
\declaretheorem[name=Lemma, sibling=proposition]{lemma}
\theoremstyle{definition}
\declaretheorem[name=Remark, qed={$\triangle$}, sibling=proposition]{remark}
\declaretheorem[name=Example, qed = {$\bigcirc$}, sibling=proposition]{example}
\declaretheorem[name=Definition, sibling=proposition]{definition}
\declaretheorem[name=Assumption, sibling=proposition]{assumption}
\declaretheorem[name=Notation,numbered=no]{notation}
\DeclareFontFamily{U}{rsfs}{\skewchar\font127}
\DeclareFontShape{U}{rsfs}{m}{n}{<-6> rsfs5 <6-8> rsfs7 <8-> rsfs10}{}
\newcommand{\numberthis}{\addtocounter{equation}{1}\tag{\theequation}}
\DeclareMathOperator{\R}{\mathbb{R}}
\DeclareMathOperator*{\E}{\mathbb{E}}
\renewcommand*{\d}{\,\mathrm{d}}
\DeclareMathOperator*{\osc}{\mathrm{osc}\,}
\DeclareMathOperator*{\Dom}{\mathrm{Dom}\,}
\begin{document}

\begin{frontmatter}
	\title{Bilinear equations in Hilbert space driven by paths of low regularity}
	\author[MFF]{Petr \v{C}oupek}
	\ead{coupek@karlin.mff.cuni.cz}
	\author[Sev]{Mar\'ia J. Garrido-Atienza\corref{cor}}
	\ead{mgarrido@us.es}
	
	\cortext[cor]{Corresponding author}
	\address[MFF]{Charles University, Faculty of Mathematics and Physics, Sokolovsk\'a 83, Prague 8, 186 75, Czech Republic}
	\address[Sev]{Universidad de Sevilla, Dpto. Ecuaciones Diferenciales y An\'alisis num\'erico, Avda. Reina Mercedes s/n, 41012-Sevilla, Spain}

	\begin{keyword}
	It\^o-F\"ollmer calculus, $p$-th variation along a sequence of partitions, Multiplicative noise, Fractional Brownian motion, Rough paths.
	\MSC[2010] Primary: 60H15, 60G22; Secondary: 34F05, 47D06.
	\end{keyword}

	\begin{abstract}
		In the article, some bilinear evolution equations in Hilbert space driven by paths of low regularity are considered and solved explicitly. The driving paths are scalar-valued and continuous, and they are assumed to have a finite $p$-th variation along a given sequence of partitions in the sense given by Cont and Perkowski \cite{ConPer18} ($p$ being an even positive integer). Typical functions that satisfy this condition are trajectories of the fractional Brownian motion of the Hurst parameter $H=\sfrac{1}{p}$. A strong solution to the bilinear problem is shown to exist if there is a solution to a certain temporally inhomogeneous initial value problem. Subsequently, sufficient conditions for the existence of the solution to this initial value problem are given. The abstract results are applied to several stochastic partial differential equations with multiplicative fractional noise, both of the parabolic and hyperbolic type, that are solved explicitly in a pathwise sense. 
	\end{abstract}
\end{frontmatter}

\section{Introduction}

\setlength{\parindent}{0em}

In the article, the evolution equation
	\begin{equation}
	\label{eq:intro_1}
		\begin{cases}
			\d{X}_t& = AX_t\d{t} + BX_t\d^\pi\omega_t, \quad s<t\leq T,\\
			\phantom{\d}X_s & = x_0,
		\end{cases}
	\end{equation}
in a Hilbert space $V$ is studied. Here, $A:V\supseteq\Dom A\rightarrow V$ and $B:V\supseteq \Dom B\rightarrow V$ are (possibly unbounded) linear operators and $\omega: [0,T]\rightarrow \R$ is a continuous function that has finite $p$-th variation along a sequence of partitions $\{\pi_n\}_{n\in\mathbb{N}}$ of the interval $[0,T]$ in the sense of \cite{ConPer18} for some positive even integer $p$. This means that the sequence of measures $\{\mu_n\}_{n\in\mathbb{N}}$ on the measurable space $([0,T],\mathcal{B}([0,T]))$ defined by 
	\begin{equation*}
		\mu_n := \sum_{[t_j, t_{j+1}]\in\pi_n} \delta_{t_j} |\omega_{t_{j+1}}-\omega_{t_j}|^p
	\end{equation*}
converges weakly to a non-atomic measure $\mu$ ($\delta_u$ denotes the Dirac measure at the point $u\in\R$). The $p$-th variation is then defined as the function $[\omega]_p^\pi(t):=\mu ([0,t])$. The notion of $p$-th variation along a sequence of partitions first appeared in F\"ollmer's seminal paper \cite{Fol81} where the case $p=2$ is considered. The concept is generalized to $p>2$ by Cont and Perkowski in the paper \cite{ConPer18}.

\medskip

Functions that have a finite $p$-th variation along a sequence of partitions very often arise as trajectories of stochastic processes. For example, it is well-known that almost every path of the Wiener process $(W_t)_{t\in [0,1]}$ has finite quadratic variation $[W]_2^\pi(t)=t$ along every non-random sequence of partitions $\pi=\{\pi_n\}_{n\in\mathbb{N}}$ of the interval $[0,T]$ whose mesh tends to zero, see the remark on p.~149 of \cite{Fol81} or the remark following Theorem 7.4 in the recent paper \cite{DavOblSio18}. However, it is also possible to consider processes of lower regularity. For example, almost every path of the fractional Brownian motion $W^H$ with the Hurst parameter $H=\sfrac{1}{2k}$, $k\in\mathbb{N}$, has a finite $2k$-th variation along a subsequence of the sequence $E_n:=\{\sfrac{i}{n}\,|\,i=0,1,\ldots,n\}$ (this in fact holds for any $H\in (0,1)$ with $p=\sfrac{1}{H}$, see e.g. \cite{EssNua15, GueNua05, Prat11, Rog97, Tud13}, but the non-integer case is not considered in this paper).

\medskip

The term $BX_t\d^\pi\omega$ in equation \eqref{eq:intro_1} formally corresponds to the linear multiplicative noise term $BX_t\,\dot{\omega}$. It is natural to interpret equation \eqref{eq:intro_1} as the integral equation
	\begin{equation}
	\label{eq:intro_integra_eq}
		X_t = x_0 + \int_s^t AX_r\d{r} + \int_s^t BX_r\d^\pi\omega_r, \quad s< t\leq T;
	\end{equation}
however, the path $\omega$ is not of bounded variation (as $p$ is assumed to be larger than one) and therefore, the second integral above has to be given a rigorous meaning. It is a key observation of F\"ollmer in \cite{Fol81} that the integral can be defined with respect to a path $\omega$ that has a finite quadratic variation along a sequence of partitions $\pi$ as a pointwise limit of Riemann sums for the case when the integrand is a $\mathscr{C}^2$ functional of the path itself. As shown by Cont and Perkowski in \cite{ConPer18}, this remains true even when $\omega$ is a path of finite $p$-th variation with $p$ being a positive even integer and the functional is of the class $\mathscr{C}^p$. 

\medskip

Following \cite{ConPer18, Fol81}, in the setting of the present paper, a function $h:[0,T]\rightarrow V$ is said to be integrable with respect to the path $\omega$ if there exists a function $f\in \mathscr{C}^{1,p}([0,T]\times\R;V)$ such that $h_t=(\partial_2f)(t,\omega_t)$ for every $t\in [0,T]$ and its integral is defined by the limit (in the topology of the space $V$) of the compensated Riemann sums
	\begin{equation}
	\label{eq:intro_2}
		\int_0^t h_r\d^\pi\omega_r := \lim_{n\rightarrow \infty} \sum_{[t_j, t_{j+1}]\in\pi_n}\sum_{k=1}^{p-1}(\partial_2^kf)(t_j,\omega_{t_j})\frac{1}{k!}(\omega_{t_{j+1}\wedge t}-\omega_{t_j\wedge t})^k, \quad 0\leq t\leq T.
	\end{equation}
As shown in \autoref{lem:change_of_variable} below, this limit exists and its value can be explicitly computed via a certain change-of-variable formula. \autoref{lem:change_of_variable} is a modification of \cite[Theorem 1.5 and Theorem 1.10]{ConPer18} that is more suitable for our purposes.

\medskip

Having given a rigorous meaning to equation \eqref{eq:intro_1}, a main result of the present paper is that if $v_{s,x_0}$ is a solution to the temporally inhomogeneous initial value problem 
	\begin{equation}
	\label{eq:intro_3}
		\begin{cases}
			\dot{v}(t) & = \tilde{C}_s(t)v(t), \quad s<t\leq T,\\
			v(s) & = x_0, 
		\end{cases}
	\end{equation}
for $s\in [0,T)$ and a sufficiently regular initial state $x_0\in V$ where $\tilde{C}_s$ is the family of linear operators defined by 
	\begin{equation*}
		\tilde{C}_s(t):=G_B(\omega_t-\omega_s)\left(A-\frac{1}{p!}\dot{[\omega]}^\pi_p(t)B^p\right)G_B(\omega_s-\omega_t), \quad 0\leq t\leq T,
	\end{equation*}
with $G_B$ being the strongly continuous group of linear operators generated by $B$ and $\dot{[\omega]}_p^\pi$ being the derivative of the $p$-th variation of $\omega$ along $\pi$ (that is assumed to exist), then the path $X_{s,x_0}^\omega$ defined by 
	\begin{equation*}
		X_{s,x_0}^\omega(t) := G_B(\omega_t-\omega_s)v_{s,x_0}(t), \quad s\leq t\leq T,
	\end{equation*}
is a solution to the bilinear problem \eqref{eq:intro_1} (see \autoref{prop:linear_equation_without_commutativity} for the precise statement). It therefore follows that in order to find a solution to \eqref{eq:intro_1}, it suffices to find a solution to problem \eqref{eq:intro_3}. This is done in \autoref{prop:parabolic_noncom} and \autoref{prop:hyperbolic_noncom} where two sets of sufficient conditions for the family $\tilde{C}_s$ to generate a strongly continuous evolution system of operators are given (see e.g. \cite{Pazy} or \cite{Tan79}). These two sets of conditions are usually referred to as the parabolic and the hyperbolic case and, roughly speaking, they correspond to sectoriality of the operator $\tilde{C}_s(t)$ for a fixed $t$. It should be noted however that even though the solution to the initial value problem \eqref{eq:intro_3} found in \autoref{prop:parabolic_noncom} and \autoref{prop:hyperbolic_noncom} is unique, uniqueness of the solution to the bilinear equation \eqref{eq:intro_1} is still an open problem.

\medskip

\textit{Literature and related approaches.} Stochastic evolution equations with multiplicative noise have been investigated by many authors in various settings. A main source of inspiration for the work in the present paper is the article \cite{DaPIanTub82} where a correspondence between a linear stochastic evolution equation with linear multiplicative scalar white noise and a random evolution equation was discovered. In fact, it is a main purpose of the present paper to show that the technique from \cite{DaPIanTub82} can be employed even if the stochastic bilinear equation is reinterpreted in the setting of the It\^o-F\"ollmer calculus. Moreover, it turns out that by considering the $p$-th variation instead of the quadratic variation, a much larger variety of driving processes can be considered.

\medskip

Explicit solutions to stochastic bilinear evolution equations driven by scalar processes that are different from the Wiener process have already been given in the literature. For example, in the articles \cite{DunMasDun05} and \cite{MasSnup18}, the driving process is a fractional Brownian motion with the Hurst parameter $H>\sfrac{1}{2}$ and the stochastic integral is interpreted in the Skorokhod sense, i.e. as the Skorokhod integral composed with a suitable fractional transfer operator, see e.g. \cite{AlosNua03}. Moreover, in the case of $H>\sfrac{1}{2}$, the stochastic integral can also be defined pathwise due to high regularity of the driving path as a generalized Young integral that is given in terms of fractional derivatives, see e.g. \cite{Zahle99}, and stochastic bilinear problems with this pathwise interpretation are studied in \cite{GarMasSnup16}. On the other hand, the literature on stochastic bilinear problems driven by fractional Brownian motions with the Hurst parameter $H<\sfrac{1}{2}$ is much more scarce. In this case, explicit solutions to the bilinear problem interpreted in the Skorokhod sense are analysed in \cite{Snup10} but the pathwise approach of \cite{GarMasSnup16} cannot be used.

\medskip

The problem of existence and uniqueness of pathwise solutions to more general (semi-)linear stochastic evolution equations with general multiplicative noise of the type $G(X_t)\d{Z}_t$ has also been treated in several papers. For example, in the article \cite{MasNua03}, the driving noise $\d{Z}$ is fractional with $H>\sfrac{1}{2}$ in time and correlated in space and the authors prove existence and uniqueness of a pathwise mild solution by employing the generalized Young integration theory of \cite{Zahle99}. The case $H<\sfrac{1}{2}$ has been also treated. Specifically, in \cite{HuNu09}, the authors considered the notion of compensated fractional derivatives and investigated the existence of solutions to stochastic differential equations driven by a scalar $\beta$-H\"older continuous path $Z$ with $\beta\in (\sfrac{1}{3},\sfrac{1}{2})$ by transforming the original equation into a system of two equations, one for the path component and the other for the area component, which are connected through the algebraic Chen relation. The results in \cite{HuNu09} were extended to evolution equations in \cite{GLSch1} and \cite{GLSch2} which can be applied to the situation where $Z$ is a Hilbert-space-valued fractional Brownian motion with the Hurst index $H\in (\sfrac{1}{3},\sfrac{1}{2}]$. See also the recent paper \cite{HesNea19} that combines the rough evolution equation approach of \cite{DeyGubTin12,GubLejTin06, GubTin10} with the techniques of \cite{GLSch2}. Generally speaking, rough path theory can be applied to solve stochastic differential equations path by path and the interested reader is referred to, for example, the excellent monographs \cite{FriHai14} and \cite{FV} and the references therein.

\medskip

In the present article, instead of the rough path theory, F\"ollmer's pathwise stochastic calculus (or more precisely its generalization due to \cite{ConPer18}) is used to find explicit pathwise solutions to the bilinear equation \eqref{eq:intro_1} but it is well-known that these two approaches are closely connected. In particular, it follows by \cite[Lemma 4.7]{ConPer18}, that the $p$-tuple $\mathbb{X}=(\mathbb{X}^0,\mathbb{X}^1, \ldots, \mathbb{X}^p)$ that is given by 
	\begin{align*}
		\mathbb{X}^0_{s,t} & := 1,\\
		\mathbb{X}^k_{s,t} & := \frac{1}{k!} (\omega_{t}-\omega_s)^k, \quad k=1,2,\ldots, p-1,\\
		\mathbb{X}^{p}_{s,t} & := \frac{1}{p!} (\omega_t-\omega_s)^{p} - \frac{1}{p!} \left([\omega]^\pi_p(t) - [\omega]^\pi_p(s)\right),
	\end{align*}
is a reduced rough path of finite $p$-th variation (as defined in \cite[Definition 4.6]{ConPer18}). If it is assumed that $f$ is a real-valued function on only one variable, i.e. $f(t,x)\equiv f(x)$, then it follows by \cite[Corollary 4.11]{ConPer18} that the integral defined by formula \eqref{eq:intro_2} coincides with the rough path integral with respect to $\mathbb{X}$ (whose definition can be found in \cite[Proposition 4.10]{ConPer18}). The interested reader can find a very thorough discussion of this connection in \cite[section 5.3]{FriHai14} for $p=2$ and in \cite[section 4.2]{ConPer18} for $p\geq 2$. It should be noted however that as opposed to the rough path theory, the construction of the integral in \eqref{eq:intro_2} and the subsequent use of F\"ollmer's change-of-variable formula is relatively simple.

\medskip

F\"ollmer's pathwise calculus has been developed by many authors, see e.g. \cite{AnaCon16, ConDas19, ConFour10, DavOblSio18, Fol81, Lem83, PerPro16, Wue80}. Moreover, this calculus has already been used to solve some differential equations that are usually considered in the framework of It\^o's integration theory without relying on any probabilistic structure, see \cite{Hir19}, and its importance has been recognized in mathematical finance see e.g. \cite{DavOblRav14, FolSchi13, LochPerPro18, PerPro16} and the references therein. In all these reference, the case $p=2$ is considered. For $p>2$, see the very recent papers \cite{ConPer18, Kim19, SchiZha19}.

\medskip

\textit{Organization of the paper}. In \autoref{sec:prelim}, the notion of the $p$-th variation along a sequence of partitions is recalled (\autoref{def:pth_variation}) and two examples are given. Moreover, the integral with respect to paths with finite $p$-th variation along a sequence of partitions is defined via a change-of-variable formula (\autoref{lem:change_of_variable} and \autoref{def:admissible_integrands}). 

\medskip

The bilinear equation \eqref{eq:intro_1} is treated in \autoref{sec:bilinear_equation}. Initially, the notion of a solution is defined and then, the reasoning is split into two parts - the first part in \autoref{sec:comm} contains the important case when $A$ and $G_B$ commute on a suitable domain and the second part in \autoref{sec:non-comm} contains the general non-commutative case. 

\medskip

The main result of \autoref{sec:comm} is \autoref{prop:linear_equation}. This is followed by three examples of stochastic (partial) differential equations with a multiplicative singular fractional noise. In \autoref{ex:one-dim_eq}, the equation is studied in dimension one. In particular, the example shows that the solution to the bilinear problem \eqref{eq:intro_1} can be viewed as a generalized geometric fractional Brownian motion. The example is then extended in \autoref{ex:inf-dim_eq_bdd_B} to infinite dimensions by assuming that the operator $A$ generates an analytic semigroup and $B$ is essentially the identity operator and the example of a heat equation is given. Finally, the subsection is concluded by \autoref{ex:B_derivative} where both $A$ and $B$ are unbounded operators. 

\medskip 

The main result of \autoref{sec:non-comm} is \autoref{prop:linear_equation_without_commutativity} which links problem \eqref{eq:intro_1} to problem \eqref{eq:intro_3} without assuming that the operator $A$ commutes with the group $G_B$ and thus generalizes \autoref{prop:linear_equation}. Then, the two cases when the family of operators $(\tilde{C}_s(t), t\in [0,T])$ is parabolic and hyperbolic are treated separately in \autoref{prop:parabolic_noncom} and \autoref{prop:hyperbolic_noncom} where two sets of sufficient conditions for the existence of the unique solution to problem \eqref{eq:intro_3} are given. Each of these two results are followed by an example. In particular, \autoref{ex:parabolic} features a heat-type equation while in \autoref{ex:hyperbolic}, a Schr\"odinger-type equation is given.

\section{Preliminaries}
\label{sec:prelim}

In the first section, we recall the notion of the $p$-th variation of a continuous function along a sequence of partitions and some of its properties. A change of variable formula that is the central tool used in the following sections is also given. 

\begin{notation}
Let $-\infty<a<b<\infty$. The symbol $\mathcal{P}[a,b]$ denotes the set of finite partitions $P=\{t_0,t_1, \ldots, t_{N(P)}\}$, $N(P)\in\mathbb{N}$, of the interval $[a,b]$ such that $a=t_0<t_1<\ldots <t_{N(P)-1}<t_{N(P)}=b$. The \textit{oscillation of a function $f\in\mathscr{C}([a,b])$ along the partition $P\in\mathcal{P}[a,b]$} is defined by 
	\begin{equation*}
		\osc(f,P) := \max_{[t_j,t_{j+1}]\in P}\max_{x,y\in [t_j,t_{j+1}]} |f(x)-f(y)|
	\end{equation*}
where $[t_j,t_{j+1}]\in P$ means that both $t_j$ and $t_{j+1}$ belong to $P$ and they are immediate successors. 
\end{notation}

Recall the definition of $p$-th variation along a sequence of partitions that is given in \cite[Definition 1.1]{ConPer18}.

\begin{definition}
\label{def:pth_variation}
Let $p>0$ and $0\leq a<b<\infty$ and let $\pi=\{\pi_n\}_{n\in\mathbb{N}}\subset \mathcal{P}[a,b]$ be a sequence of partitions of the interval $[a,b]$. A function $S\in\mathscr{C}([a,b])$ is said to have a \textit{finite $p$-th variation along the sequence $\pi$} if 
	\begin{equation*}
		\lim_{n\rightarrow\infty}\osc(S,\pi_n)=0,
	\end{equation*}
and if the sequence of measures $\{\mu_n\}_{n\in\mathbb{N}}$ on the measurable space $([a,b],\mathcal{B}([a,b]))$ that is given by 
	\begin{equation*}
		\mu_n := \sum_{[t_j,t_{j+1}]\in\pi_n} \delta_{t_j}\left|S(t_{j+1})-S({t_{j})}\right|^p
	\end{equation*}
converges weakly to a non-atomic measure $\mu$. Here, $\delta_u$ denotes the Dirac measure at the point $u\in\R$. If $S$ has a finite $p$-th variation along the sequence $\pi$, the notation $S\in V_{p}(\pi)$ is used and the function $[S]^\pi_p:[a,b]\rightarrow [0,\infty)$ defined by $[S]^\pi_p(t):= \mu([a,t])$ is called the \textit{$p$-th variation of $S$ along $\pi$}.
\end{definition}

\begin{remark}
The concept of $p$-th variation defined above is extremely dependent on the sequence of partitions. In the case $p=2$, that is usually called the \textit{quadratic variation}, this phenomenon is thoroughly investigated in \cite[section 7]{DavOblSio18} where the following result is given in Theorem 7.4 (see also Scholium A.1). Let $(\Omega,\mathcal{F},\mathbb{P})$ be a probability space. It is understood that all the following random processes and variables are defined on this space. Let $(W_t, t\in[0,1])$ be a Wiener process and let $(\mathcal{F}_t)_{t\in [0,T]}$ be the filtration generated by $W$. Then for every continuous non-negative increasing stochastic process $(A_t, t\in [0,1])$ that starts at zero, there exists a sequence $\pi=\{\pi_n\}_n$ of refining random partitions of the interval $[0,1]$ such that $\{k2^{-N}\,|\,k=0,1,\ldots, 2^n\}\subseteq \pi_n$ holds for every $n\in\mathbb{N}$ $\mathbb{P}$-almost surely and for which $W\in V_{2}(\pi)$ $\mathbb{P}$-almost surely with $[W]^\pi_2(t) = A_t,$ $t\in [0,1]$. In other words, any continuous non-negative increasing stochastic process can be viewed as a quadratic variation of the Wiener process along a suitable sequence of random partitions. On the other hand, as long as we restrict ourselves to sequences of partitions that consist of stopping times, the quadratic variation along such sequences is independent of the chosen partition. More precisely, it follows by \cite[Proposition 2.3]{DavOblSio18} that if $\tau=\{\tau_n\}_n$ is a sequence of random partitions that consist of $\mathcal{F}_t^W$-stopping times and that satisfies
$\lim_{n\rightarrow\infty}\mathrm{osc}\,(W,\tau_n)=0$ $\mathbb{P}$-almost surely, then there exists a subsequence $\tilde{\tau}\subseteq\tau$ such that $W\in V_2(\tilde{\tau})$ with $[W]_2^{\tilde{\tau}}(t)=t$,  $t\in [0,1],$ $\mathbb{P}$-almost surely.
\end{remark}

\begin{remark}
Let us also stress that the concept of $p$-th variation along a sequence is very different from the usual notion of $p$-variation. In particular, it is well-known (see e.g. \cite[Remark on p. 28]{RevYor99}) that 
			\begin{equation*}
				\|W\|_{2-\mathrm{var}, [0,1]} := \left(\sup_{\pi\in\mathcal{P}[0,1]}\sum_{[t_i,t_{i+1}]\in\pi}|W_{t_{i+1}}-W_{t_i}|^2\right)^\frac{1}{2} = \infty				
			\end{equation*}
		holds $\mathbb{P}$-almost surely while, given a refining deterministic sequence of partitions $\pi=\{\pi_n\}_{n\in\mathbb{N}}$, there exists a subsequence $\tilde{\pi}\subseteq\pi$ such that $W$ has a finite quadratic variation along $\tilde{\pi}$ $\mathbb{P}$-almost surely.
\end{remark}

\begin{example}
This example, that can be found in \cite{MisSchi19}, shows that not only sample paths of random processes have finite $p$-th variation, but there are also some purely deterministic functions with this property. Define the \textit{Faber-Schauder} functions by 
	\begin{align*}
		e_{0,0}(t) &:= \max\{0,\min\{t,1-t\}\}\\
		e_{n,k}(t) &:= 2^{-\frac{n}{2}}e_{0,0}(2^nt-k), \quad n\in\mathbb{N}, k\in\mathbb{Z},
	\end{align*}
for $t\in\R$ and consider the \textit{Takagi-Landsberg function} $\tau_{H}$ with $H\in (0,1)$ defined by
	\begin{equation*}
		\tau_{H}(t):= \sum_{n=0}^\infty 2^{n\left(\frac{1}{2}-H\right)} \sum_{k=0}^{2^n-1}e_{n,k}(t), \quad 0\leq t\leq 1.
	\end{equation*}
Then by \cite[Theorem 2.1]{MisSchi19}, the function $\tau_{H}$ has a finite $\sfrac{1}{H}$-th variation along the sequence of dyadic partitions of the interval $[0,1]$, i.e. along the sequence of partitions $D=\{D_n\}_{n\in\mathbb{N}_0}$ where 
	\begin{equation}
	\label{eq:dyadic_partitions}
		D_n:=\{k2^{-n}\, \left|\right.\, k=0,1,\ldots, 2^n\}
	\end{equation}
with $[\tau_H]_\frac{1}{H}^D(t)=t\E|Z_H|^\frac{1}{H}$ for $0\leq t\leq 1$ where the random variable $Z_H$ is defined by 
	\begin{equation*}
		Z_{H} := \sum_{n=0}^\infty 2^{-n(1-H)} Y_n
	\end{equation*}
for a sequence $\{Y_n\}_{n\in\mathbb{N}_0}$ that consists of independent, identically distributed random variables with the discrete uniform distribution on the set $\{-1,1\}$ defined on some probability space $(\Omega,\mathcal{F},\mathbb{P})$.
\end{example}

\begin{example}
\label{ex:stoch_int_fBm}
Consider the fractional Brownian motion $W^H$ on the interval $[0,1]$ defined on a probability space $(\Omega,\mathcal{F},\mathbb{P})$. Assume that the Hurst parameter $H$ belongs to the interval $(0,\sfrac{1}{2}]$ and that the $\sigma$-algebra $\mathcal{F}$ is generated by the process $W^H$. 
Let $\mathscr{H}$ be the reproducing kernel Hilbert space for the process $W^H$; that is, $\mathscr{H}$ is the completion of the space of step functions on the interval $[0,1]$ with respect to the inner product
	\begin{equation*}
		\langle\bm{1}_{[0,s]},\bm{1}_{[0,s]}\rangle_{\mathcal{H}} := R_H(s,t), \quad s,t\in [0,1],
	\end{equation*}
where $R_H$ is the covariance function of the fractional Brownian motion $W^H$ given by 
	\begin{equation*}
		R_H(s,t)= \frac{1}{2}\left(s^{2H} + t^{2H}-|s-t|^{2H}\right), \quad s,t\in [0,1].  
	\end{equation*}
It can be shown that the space $\mathscr{H}$ consists of real-valued functions (since it is assumed that $H\leq\sfrac{1}{2}$) and there exists its characterisation as an image of the space $L^2(0,1)$ under a certain fractional integral, see \cite[Theorem 3.3]{DecrUstu99} and \cite[section 4]{PipTaqq01} for the details. 
On the space $\mathscr{H}$, an isonormal Gaussian process, denoted again by $W^H$, is given by the Wiener integral with respect to the process $W^H$ that is defined defined as an extension of the map $\bm{1}_{[0,t]}\mapsto W^H_t$. This means that a notion of a stochastic integral as well as that of a stochastic derivative for $W^H$ can be defined using the Malliavin calculus, see \cite{Nua06}. In particular, the \textit{stochastic derivative $D^{H}$ (with respect to $W^H$)} is defined by
	\begin{equation*}
		D^HF = \sum_{i=1}^n (\partial_if)(W^H(h_1), W^H(h_2), \ldots, W^H(h_n))h_i
	\end{equation*}
for a random variable $F$ that is given by $F=f(W^H(h_1), W^H(h_2), \ldots, W^H(h_n))$ for some $n\in\mathbb{N}$, $\{h_i\}_{i=1}^n\subset\mathscr{H}$, and $f:\mathbb{R}^n\rightarrow\mathbb{R}$ that is infinitely differentiable and such that $f$ and all its partial derivatives are bounded (the space of such $f$ is denoted by $\mathcal{S}_b$ in the sequel). It turns out that the operator $D^H:\mathcal{S}_b\rightarrow L^2(\Omega;\mathscr{H})$ is closable and its domain is the \textit{Sobolev-Watanabe space (with respect to $W^H$)} $\mathbb{D}^{1,2}_H$ that is defined as the closure of $\mathcal{S}_b$ with respect to the norm
	\begin{equation*}
		\|F\|_{\mathbb{D}_H^{1,2}} := \left(\mathbb{E}|F|^2 + \mathbb{E}\|D^HF\|_{\mathscr{H}}\right)^{\frac{1}{2}}, \quad F\in\mathcal{S}_b.
	\end{equation*}
Similarly as in the above case of real-valued random variables $F$, this space as well as the stochastic derivative can be defined for $\mathscr{H}$-valued random variables (or even general Hilbert-space-valued random variables) in which case, the space is denoted by $\mathbb{D}^{1,2}_H(\mathscr{H})$. The stochastic integral is defined as the $L^2$-adjoint of the stochastic derivative. More precisely, its domain $\Dom \delta^H$ is the set of all $u\in L^2(\Omega;\mathscr{H})$ for which there exists a constant $c>0$ such that the inequality 
	\begin{equation*}
		\left|\langle D^HF,u\rangle_{L^2(\Omega;\mathscr{H})}\right|\leq c \|F\|_{L^2(\Omega)}
	\end{equation*}
is satisfied for every $F\in\mathcal{S}_b$. For $u\in\Dom \delta^H$, the symbol $\delta^H(u)$ denotes the unique element of the space $L^2(\Omega)$, whose existence is ensured by the Riesz representation theorem, for which the equality
	\begin{equation*}
		\langle F,\delta^H(u)\rangle_{L^2(\Omega)} = \langle u,D^HF\rangle_{L^2(\Omega;\mathscr{H})}
	\end{equation*}
is satisfied for every $F\in\mathcal{S}_b$. The operator $\delta^H:\Dom\delta^H\rightarrow L^2(\Omega)$ is then called the \textit{stochastic integral (with respect to $W^H$)}. 

Assume now that $u\in \mathbb{D}^{1,2}(\mathscr{H})$ is such that there exists $q>\sfrac{1}{H}$ and constants $L_u$,$L_{Du}>0$, and $\gamma>\sfrac{1}{2}-H$ such that the inequalities
	\begin{equation}
	\label{eq:ass_u_1}
		\|u_t-u_s\|_{L^q(\Omega)} \leq L_u |t-s|^\gamma
	\end{equation}
and 
	\begin{equation}
	\label{eq:ass_u_2}
		\|D^Hu_t-D^Hu_s\|_{L^q(\Omega;\mathscr{H})} \leq L_{Du}|t-s|^\gamma
	\end{equation}
are satisfied for every $s,t\in [0,1]$. Assume, moreover, that there exist constants $0\leq \alpha< 2H$ and $L>0$ such that the inequality
	\begin{equation}
	\label{eq:ass_u_3}
		\sup_{s\in [0,1]} \|D_s^Hu_t\|_{L^{\sfrac{1}{H}}(\Omega)} \leq Lt^{-\alpha}
	\end{equation}
is satisfied for every $t\in (0,1]$ and denote 
	\begin{equation*}
		i_t(u):=\int_0^tu_s\delta W^H_s := \delta (\bm{1}_{[0,t]}u), \quad t\in [0,1]. 
	\end{equation*}
Then it follows by slightly modifying the proof of \cite[Theorem 4.1]{EssNua15} that for every $t\in [0,1]$, the convergence
	\begin{equation}
	\label{eq:convergence_of_integral}
		\sum_{\substack{j=0,1, \ldots, n \\ \sfrac{i}{n}\leq t}} \left|\int_0^{\frac{(i+1)}{n}} u_s\delta W^H_s - \int_0^{\frac{i}{n}}u_s\delta W^H_s\right|^\frac{1}{H} \quad \underset{n\rightarrow\infty}{\longrightarrow}\quad c_{\frac{1}{H}}\int_0^t |u_s|^\frac{1}{H}\d{s},
	\end{equation}
where $c_{\frac{1}{H}}:=\E|W_1^H|^\frac{1}{H}$, holds in the space $L^1(\Omega)$. Consequently, it follows by \cite[Lemma 1.3]{ConPer18} that $\mathbb{P}$-almost every sample path of the integral process $(i_t(u), t\in [0,1])$ has a finite $\sfrac{1}{H}$-th variation along a subsequence $\tilde{E}=\{E_{n_k}\}_{k\in\mathbb{N}_0}$, where $\{n_k\}_{k\in\mathbb{N}_0}\subseteq\mathbb{N}_0$, $n_k\uparrow\infty$ as $k\uparrow\infty$, of the sequence of equidistant partitions $E=\{E_n\}_{n\in\mathbb{N}}$, where $E_n:=\{\sfrac{k}{n}\,|k=0,1,\ldots,n\}$, of the interval $[0,1]$. 

We finish the example with two remarks. First, we remark that the convergence \eqref{eq:convergence_of_integral} holds even under weaker conditions on the process $u$ than conditions \eqref{eq:ass_u_1} - \eqref{eq:ass_u_3} but we do not state them here in full generality for simplicity of the exposition. The interested reader is referred to \cite[Theorem 4.1]{EssNua15}. Second, note that the process $u\equiv 1$ satisfies conditions \eqref{eq:ass_u_1} - \eqref{eq:ass_u_3} and thus it follows, as a special case of the convergence \eqref{eq:convergence_of_integral}, that the fractional Brownian motion $W^H$ has a finite $\sfrac{1}{H}$-th variation along the sequence $\tilde{E}$. The $\sfrac{1}{H}$-th variation of $W^H$ along the sequence $E$ is also investigated in \cite{Prat11, Tud13}. Additionally, it can also be shown that $W^H$ has a finite $\sfrac{1}{H}$-th variation along a subsequence $\tilde{D}$ of the sequence $D=\{D_n\}_{n\in\mathbb{N}_0}$ of dyadic partitions of the interval $[0,1]$. This follows by using the result of \cite[section 2]{Rog97} together with self-similarity of the fractional Brownian motion to obtain the convergence 
	\begin{equation*}
		\sum_{\substack{k=0,1,\ldots,2^n-1\\\sfrac{k}{2^n}\leq t}}\left|W_{\frac{k+1}{2^n}}^H-W_{\frac{k}{2^n}}^H\right|^\frac{1}{H}\quad \underset{n\rightarrow\infty}{\longrightarrow}\quad c_{\frac{1}{H}} t
	\end{equation*}
in $L^1(\Omega)$ for every dyadic rational $t=\sfrac{L}{2^N}$, $N\in\mathbb{N}$ and $L\in \{0,1,\ldots, 2^N\}$. By a density argument, this convergence is subsequently extended to every $t\in [0,1]$.
\end{example}

\begin{notation}
By the symbol $\mathscr{C}^{1,p}([a,b]\times\R;V)$ where $-\infty<a<b<\infty$ and $p\in\mathbb{N}$ and $V$ is a Hilbert space, we mean the set of functions $f:[a,b]\times \R\rightarrow V$ such that the derivative of $f$ in the first variable $\partial_1f$ exists on $(a,b)\times \R$ and has a continuous extension to $[a,b]\times\R$; and, moreover, the $p$-th partial derivative of $f$ in the second variable $\partial_2^pf$ is continuous and exists on $[a,b]\times\R$.
\end{notation}

The following change-of-variable formula will be needed in the sequel. It is proved in a similar manner as \cite[Theorem 1.5]{ConPer18}.

\begin{lemma}
\label{lem:change_of_variable}
Let $-\infty< a<b<\infty$ and $p\in\mathbb{N}$ be even and let $(V,\langle\cdot,\cdot\rangle_V, \|\cdot\|_V)$ be a Hilbert space. Let $\omega$ be a path such that $\omega\in V_p(\pi)$ for a sequence of partitions $\pi=\{\pi_n\}_{n\in\mathbb{N}}\subset\mathcal{P}[a,b]$ whose mesh size \[\|\pi_n\|:=\max_{[t_j,t_{j+1}]\in\pi_n}|t_{j+1}-t_j|\] tends to zero as $n\rightarrow\infty$. Assume further that $f$ is a function that belongs to the space $\mathscr{C}^{1,p}([a,b]\times\R;V)$. Then the formula
	\begin{equation}
	\label{eq:ito}
		f(t,\omega_t) - f(a,\omega_a)  = \int_a^t(\partial_1f)(u,\omega_u)\d{u} +\frac{1}{p!}\int_a^t(\partial_2^pf)(u,\omega_u)\d[\omega]^\pi_p(u)+ \int_a^t (\partial_2f)(u, \omega_u)\d^\pi\omega_u
	\end{equation}
is satisfied for every $t\in (a,b]$ with the last integral defined as the limit of compensated Riemann sums
	\begin{equation}
	\label{eq:def_int}
		\int_a^t (\partial_2f)(u, \omega_u)\d^\pi\omega_u := \lim_{n\rightarrow\infty}\sum_{[t_j,t_{j+1}]\in\pi_n} \sum_{k=1}^{p-1}\frac{1}{k!} (\partial_2^kf)(t_j,\omega_{t_j}) (\omega_{t_{j+1}\wedge t}-\omega_{t_j\wedge t})^k.
	\end{equation}
\end{lemma}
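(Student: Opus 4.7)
The plan is to mirror the It\^o-F\"ollmer-Cont-Perkowski argument of \cite[Theorem 1.5]{ConPer18} in the Hilbert-space-valued setting. Fix $t\in(a,b]$; along the partition $\pi_n$, telescope $f(t,\omega_t)-f(a,\omega_a)$ cell by cell and expand each cell increment by Taylor's formula, to order $1$ in the first variable and to order $p$ in the second. Three of the resulting pieces will be shown to converge to identifiable terms (the $\partial_1 f$ integral, the $\partial_2^p f$ integral against $\d[\omega]_p^\pi$, and a vanishing remainder), which forces the remaining piece---the compensated Riemann sum in \eqref{eq:def_int}---to converge as well, giving both its well-definedness and the formula \eqref{eq:ito} after rearrangement.

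Concretely, writing $t_j^\wedge := t_j\wedge t$ and $\Delta_j\omega := \omega_{t_{j+1}^\wedge}-\omega_{t_j^\wedge}$, I would split each cell increment through the auxiliary point $(t_j^\wedge,\omega_{t_{j+1}^\wedge})$:
\begin{equation*}
f(t_{j+1}^\wedge,\omega_{t_{j+1}^\wedge}) - f(t_j^\wedge,\omega_{t_j^\wedge}) = \int_{t_j^\wedge}^{t_{j+1}^\wedge}(\partial_1 f)(u,\omega_{t_{j+1}^\wedge})\d u + \bigl[f(t_j^\wedge,\omega_{t_{j+1}^\wedge}) - f(t_j^\wedge,\omega_{t_j^\wedge})\bigr].
\end{equation*}
Summing the time integrals produces $\int_a^t(\partial_1 f)(u,\omega_u)\d u$ up to an error of at most $(b-a)$ times the modulus of continuity of $\partial_1 f$ on the compact set $[a,b]\times\omega([a,b])$ evaluated at scale $\osc(\omega,\pi_n)\to 0$. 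To the space increment I would apply the integral-form Taylor formula in the second variable about $\omega_{t_j^\wedge}$, producing
\begin{equation*}
f(t_j^\wedge,\omega_{t_{j+1}^\wedge}) - f(t_j^\wedge,\omega_{t_j^\wedge}) = \sum_{k=1}^{p-1}\frac{(\Delta_j\omega)^k}{k!}(\partial_2^k f)(t_j^\wedge,\omega_{t_j^\wedge}) + \frac{(\Delta_j\omega)^p}{p!}(\partial_2^p f)(t_j^\wedge,\omega_{t_j^\wedge}) + \rho_j^{(n)},
\end{equation*}
with the $V$-valued remainder $\rho_j^{(n)}$ bounded in norm by $(|\Delta_j\omega|^p/p!)\,\varepsilon_n$ for some $\varepsilon_n\to 0$ coming from uniform continuity of $\partial_2^p f$ at scale $\osc(\omega,\pi_n)$. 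Because $p$ is even, $(\Delta_j\omega)^p = |\Delta_j\omega|^p$, so the leading $p$-th term sums to $\int_{[a,t]}(\partial_2^p f)(u,\omega_u)\,\mathrm{d}\mu_n(u)$, which converges to $\int_a^t(\partial_2^p f)(u,\omega_u)\d[\omega]_p^\pi(u)$ by the weak convergence $\mu_n\Rightarrow\mu$ combined with non-atomicity of $\mu$ at the endpoint $t$; the summed Taylor remainder is controlled by $\mu_n([a,b])\,\varepsilon_n/p!\to 0$.

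The main technical hurdle is this $V$-valued weak-convergence step: the defining weak convergence $\mu_n\Rightarrow\mu$ from \autoref{def:pth_variation} is formulated for scalar bounded continuous test functions, whereas the integrand $u\mapsto(\partial_2^p f)(u,\omega_u)$ takes values in the Hilbert space $V$. The workaround is a standard $\varepsilon/3$ approximation: using uniform continuity of $\partial_2^p f$ on $[a,b]\times\omega([a,b])$, approximate the continuous $V$-valued map $u\mapsto(\partial_2^p f)(u,\omega_u)$ uniformly by a step function whose break points are continuity points of $\mu$ (automatic by non-atomicity), and for each piece invoke the scalar convergence $\mu_n([s_i,s_{i+1}])\to\mu([s_i,s_{i+1}])$. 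With this in hand, subtracting the three convergent sums from $f(t,\omega_t)-f(a,\omega_a)$ leaves precisely the compensated Riemann sum in \eqref{eq:def_int}, which therefore converges in $V$, yielding \eqref{eq:ito}. The parity of $p$ enters only at the identification $(\Delta_j\omega)^p = |\Delta_j\omega|^p$, without which the measure $\mu_n$ (which charges absolute values) would not match the signed quantity appearing in the Taylor expansion.
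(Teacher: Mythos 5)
Your proposal is correct and follows essentially the same route as the paper: the identical telescoping and splitting of each cell increment through the auxiliary point $(t_j\wedge t,\omega_{t_{j+1}\wedge t})$, a uniform-continuity estimate for the time part, and the Cont--Perkowski Taylor-expansion/weak-convergence argument for the space part. The only difference is that the paper handles the time increment via a mean-value inequality and simply cites \cite[Theorem 1.5]{ConPer18} for the second sum, whereas you spell out that part (parity of $p$, the $V$-valued weak-convergence step) explicitly.
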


\begin{proof}
Write
	\begin{align*}
		f(t,\omega_t)-f(a,\omega_a) & = \sum_{[t_j,t_{j+1}]\in\pi_n} \left[f(t_{j+1}\wedge t, \omega_{t_{j+1}\wedge t}) - f(t_{j}\wedge t,\omega_{t_j\wedge t})\right]\\
			& = \sum_{[t_j,t_{j+1}]\in\pi_n} \left[f(t_{j+1}\wedge t, \omega_{t_{j+1}\wedge t}) - f(t_{j}\wedge t,\omega_{t_{j+1}\wedge t})\right] \\
			& \hspace{2cm} +\sum_{[t_j,t_{j+1}]\in\pi_n}\left[f(t_{j}\wedge t,\omega_{t_{j+1}\wedge t})- f(t_{j}\wedge t,\omega_{t_j\wedge t})\right].\numberthis\label{eq:split}
	\end{align*}
In order to treat the first sum notice that there is the estimate
	\begin{align*}
		\sum_{[t_j,t_{j+1}]\in\pi_n}\left\|f(t_{j+1}\wedge t, \omega_{t_{j+1}\wedge t}) - f(t_{j}\wedge t,\omega_{t_{j+1}\wedge t}) - (\partial_1f)(t_j\wedge t,\omega_{t_{j+1}\wedge t})(t_{j+1}\wedge t-t_j\wedge t)\right\|_V & \leq \\
		& \hspace{-11.5cm} \leq \sum_{[t_j,t_{j+1}]\in\pi_n} \left|t_{j+1}\wedge t-t_j\wedge t\right| \sup_{r\in [t_j,t_{j+1}]} \left\|(\partial_1f)(r,\omega_{t_{j+1}\wedge t}) - (\partial_1f)(t_j\wedge t,\omega_{t_{j+1}\wedge t})\right\|_V
	\end{align*}
by \cite[(8.6.2) on p. 162]{Die69}. The right-hand side of the above expression tends to zero as $n\rightarrow\infty$ since $\|\pi_n\|\rightarrow 0$ as $n\rightarrow\infty$ and the convergence of the first sum on the right-hand side of equation \eqref{eq:split} to the first integral on the right-hand side of \eqref{eq:ito} follows. The second sum in \eqref{eq:split} is treated in the same manner as in \cite[Theorem 1.5]{ConPer18}.
\end{proof}

\autoref{lem:change_of_variable} motivates the following definition of admissible integrands for a process with finite $p$-th variation along a sequence of partitions.

\begin{definition}
\label{def:admissible_integrands}
Let $-\infty<a<b<\infty$, $p\in\mathbb{N}$ be even and let $(V,\langle\cdot,\cdot\rangle_V,\|\cdot\|_V)$ be a Hilbert space. Assume that $\omega$ is a function such that $\omega\in V_p(\pi)$ for a sequence of partitions $\pi=\{\pi_n\}_{n\in\mathbb{N}}\subset\mathcal{P}[a,b]$. We say that a function $h:[a,b]\rightarrow V$ is \textit{integrable with respect to} $\omega$ \textit{on the interval $[a,b]$} if there exists a function $f\in\mathscr{C}^{1,p}([a,b]\times\R;V)$ such that $h_t = (\partial_2 f)(t,\omega_t)$ for every $t\in [a,b]$.
\end{definition}

\section{Bilinear evolution equations}
\label{sec:bilinear_equation}

Throughout this section, $(V, \langle\,\cdot\,,\,\cdot\,\rangle_V, \|\,\cdot\,\|_{V})$ is a Hilbert space, and $A:\Dom A\subseteq V\rightarrow V$ and $B:\Dom B\subseteq V\rightarrow V$ be two (not necessarily bounded) linear operators. Moreover, $\omega$ is a function such that $\omega\in V_p(\pi)$ for an even positive integer $p$ and a sequence of partitions $\pi=\{\pi_n\}_{n\in\mathbb{N}}\subset\mathcal{P}[0,T]$, $T>0$, whose mesh size tends to zero. The bilinear problem
	\begin{equation}
		\label{eq:BCP}
		\tag{BLP}
			\begin{cases}
				\d{X}_t & = AX_t\d{t} + BX_t\d^\pi\omega_t, \quad  s< t\leq T, \\
				\phantom{\d}X_s & = x_0
			\end{cases}
	\end{equation}
for $s\in [0,T)$ and $x_0\in V$ is considered in this section.

\begin{definition}
A function $X: [s,T]\rightarrow V$ is said to be a \textit{strong solution} to problem \eqref{eq:BCP} if $X$ takes values in the set $\Dom A \cap \Dom B$, the function $r\mapsto AX_r$ is integrable on the interval $[s,T]$ with respect to the Lebesgue measure, the function $r\mapsto BX_r$ is integrable with respect to $\omega$ on the interval $[s,T]$ in the sense of \autoref{def:admissible_integrands}, and if the equation 
			\begin{equation*}
				X_t = x_0 + \int_s^t AX_r\d{r} + \int_s^t BX_r\d^\pi\omega_r
			\end{equation*}
is satisfied for every $t\in (s,T]$. The second integral in the above equation is defined as the limit of compensated Riemann sums given by formula \eqref{eq:def_int}. 
\end{definition}

\begin{definition}
For every $t\in [0,T]$, let $K(t): \Dom K(t) \subseteq V\rightarrow V$ be a linear operator and consider the homogeneous initial value problem 
	\begin{equation}
	\label{eq:general_evolution_equation}
		\begin{cases}
			\dot{v}(t) & = K(t)v(t), \quad  r<t\leq T,\\
					v(r) & = v_0\\
		\end{cases}
	\end{equation}
for $r\in [0,T)$ and $v_0\in V$. A function $v: [r,T]\rightarrow V$ is called a \textit{solution} to problem \eqref{eq:general_evolution_equation} if $v\in\mathscr{C}^1([r,T];V)$, $v(t)\in\Dom K(t)$ for every $t\in [r,T]$, and $v$ satisfies both equations in \eqref{eq:general_evolution_equation}. A solution $v$ to problem \eqref{eq:general_evolution_equation} is called \textit{$\tilde{V}$-valued} if there exists a set $\tilde{V}$ contained in each $\Dom K(t)$ such that $v_{r,v_0}(t)\in\tilde{V}$ for every $t\in [r,T]$.
\end{definition}

Consider the following assumption.

\begin{assumption} 
\label{ass:1}
\leavevmode
	\begin{itemize}
	\itemsep0em
	\item The $p$-th variation $[\omega]^\pi_p$ of the path $\omega$ is continuously differentiable on the interval $[0,T]$ and we denote its derivative by $\dot{[\omega]}^\pi_p$.
	\item The operator $B$ is an infinitesimal generator of a strongly continuous group of bounded linear operators acting on the space $V$ that is denoted by $G_B$.
	\end{itemize}
\end{assumption}

If \autoref{ass:1} is satisfied by $\omega$ and $B$, set
	\begin{equation}
	\label{eq:dom_C}
		\Dom C:= \Dom A\cap \Dom B^p
	\end{equation} 
and consider the family of linear operators $C(t):\Dom C\rightarrow V$ that are defined by 
	\begin{equation}
	\label{eq:C}
		C(t):=A-\frac{1}{p!}\dot{[\omega]}^\pi_p(t)B^p.
	\end{equation}

\begin{remark}
\label{ex:integrands_have_variation}
Note that the assumption of continuous differentiability of the function $[\omega]^\pi_p$ is not too restrictive. \autoref{ex:stoch_int_fBm} already provides many examples that satisfy this condition. In particular, let $p$ be an even positive integer and let $u:[0,1]\rightarrow\R$ be a deterministic function that belongs to the H\"older space $\mathscr{C}^\beta([0,1])$ with some $\beta>\sfrac{1}{2}-\sfrac{1}{p}$. Then $u$ clearly satisfies conditions \eqref{eq:ass_u_1} - \eqref{eq:ass_u_3}, so that we have, for every $t\in [0,1]$ $\mathbb{P}$-almost surely, 
	\begin{equation*}
		\left[\int_0^\cdot u_s\delta W_s^{\frac{1}{p}}\right]_p^{\tilde{E}} (t) = c_{p}\int_0^t |u_s|^p\d{s}
	\end{equation*}
which is continuously differentiable in $t$ since $u$ is continuous. 
\end{remark}

\subsection{The commutative case}
\label{sec:comm}

In this section, a strong solution to \eqref{eq:BCP} is found under the assumption that the operator $A$ commutes with the group $G_B$ (on an appropriate domain). Consider the homogeneous initial value problem 
	\begin{equation}
	\label{eq:CP}
	\tag{CP}
		\begin{cases}
			\dot{v}(t) & = C(t)v(t), \quad  s< t\leq T,\\
			v(s) & = x_0.
		\end{cases}
	\end{equation}
The next proposition connects the solution to problem \eqref{eq:CP} to the solution to problem \eqref{eq:BCP}.

\begin{proposition}
\label{prop:linear_equation}
Let \autoref{ass:1} be verified. Assume furthermore that there exists a $\tilde{V}$-valued solution $v_{s,x_0}$ to the problem \eqref{eq:CP} with $\tilde{V}$ being a subset of $\Dom C$ that is closed under the action of the group $G_B$, i.e. $G_B\tilde{V}\subseteq \tilde{V}$, and such that the following condition is satisfied:
	\begin{enumerate}[label={\textnormal{(AB)}}, leftmargin=\widthof{(AB)}+\labelsep]
	\itemsep0em
	\item\label{ass:AB} The operator $A$ commutes with the group $G_B$ on $\tilde{V}$, i.e. the equation 
		\begin{equation*}
		AG_B(t)y = G_B(t)Ay
	\end{equation*}
is satisfied for every $y\in \tilde{V}$ and $t\in\R$. 
	\end{enumerate}
Then the function $X_{s,x_0}^\omega: [s,T]\rightarrow V$ defined by 
	\begin{equation*}
		X_{s,x_0}^\omega(t) := G_B(\omega_t-\omega_s)v_{s,x_0}(t)
	\end{equation*}
is a strong solution to the problem \eqref{eq:BCP}.
\end{proposition}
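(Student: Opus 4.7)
The plan is to reduce \eqref{eq:BCP} to an application of the change-of-variable formula \autoref{lem:change_of_variable} to the $V$-valued function
\[
f(t,x) := G_B(x-\omega_s)\,v_{s,x_0}(t), \qquad (t,x)\in[s,T]\times\R,
\]
noting that $f(t,\omega_t) = X_{s,x_0}^\omega(t)$. First I would check that $f\in\mathscr{C}^{1,p}([s,T]\times\R;V)$: since $v_{s,x_0}$ is $\tilde V$-valued with $\tilde V\subseteq\Dom C\subseteq\Dom A\cap\Dom B^p$, and since $G_B$ is a strongly continuous group, one has $\partial_2^k f(t,x) = B^k G_B(x-\omega_s)v_{s,x_0}(t) = G_B(x-\omega_s)B^k v_{s,x_0}(t)$ for $k=1,\dots,p$, while $v_{s,x_0}\in\mathscr{C}^1$ gives $\partial_1 f(t,x) = G_B(x-\omega_s)\dot v_{s,x_0}(t) = G_B(x-\omega_s)C(t)v_{s,x_0}(t)$. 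Continuity of these maps in $(t,x)$ reduces, via strong continuity of $G_B$, to continuity of $t\mapsto Av_{s,x_0}(t)$ and $t\mapsto B^p v_{s,x_0}(t)$, which follows from the assumed $\mathscr{C}^1$-regularity of the solution to \eqref{eq:CP} once the individual terms of $C(t)v(t)$ are identified (this is where I expect the main technical care to be needed).

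Next I would plug these derivatives into \eqref{eq:ito}. The Föllmer integral on the right-hand side becomes
\[
\int_s^t \partial_2 f(u,\omega_u)\,\d^\pi\omega_u = \int_s^t B\,X^\omega_{s,x_0}(u)\,\d^\pi\omega_u,
\]
and, since $[\omega]^\pi_p$ is absolutely continuous with density $\dot{[\omega]}^\pi_p$, the Lebesgue--Stieltjes term combines with $\int_s^t\partial_1 f(u,\omega_u)\,\d u$ to give
\[
\int_s^t G_B(\omega_u-\omega_s)\Bigl(C(u) + \tfrac{1}{p!}\dot{[\omega]}^\pi_p(u)B^p\Bigr)v_{s,x_0}(u)\,\d u = \int_s^t G_B(\omega_u-\omega_s)\,A\,v_{s,x_0}(u)\,\d u,
\]
where the definition \eqref{eq:C} of $C(u)$ has been used to cancel the $B^p$ contributions.

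The decisive step is to move $A$ past $G_B(\omega_u-\omega_s)$: hypothesis \ref{ass:AB}, together with $v_{s,x_0}(u)\in\tilde V$, gives $G_B(\omega_u-\omega_s)Av_{s,x_0}(u) = A\,G_B(\omega_u-\omega_s)v_{s,x_0}(u) = A\,X^\omega_{s,x_0}(u)$. Moreover, $G_B\tilde V\subseteq \tilde V\subseteq\Dom A\cap\Dom B^p$ ensures $X^\omega_{s,x_0}(t)\in\Dom A\cap\Dom B$ for every $t\in[s,T]$, and the integrand $r\mapsto BX^\omega_{s,x_0}(r)=\partial_2 f(r,\omega_r)$ is admissible in the sense of \autoref{def:admissible_integrands} with the witnessing function $f$ just constructed. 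Putting the pieces together and observing that the left-hand side of \eqref{eq:ito} equals $X^\omega_{s,x_0}(t) - x_0$ (as $G_B(0)=I$), I obtain
\[
X^\omega_{s,x_0}(t) = x_0 + \int_s^t A\,X^\omega_{s,x_0}(u)\,\d u + \int_s^t B\,X^\omega_{s,x_0}(u)\,\d^\pi\omega_u,
\]
which is exactly the integral equation defining a strong solution to \eqref{eq:BCP}. The main obstacle is the regularity verification in the first step, in particular continuity of $t\mapsto B^p v_{s,x_0}(t)$ and $t\mapsto Av_{s,x_0}(t)$ separately; everything else is bookkeeping based on $\partial_2^k f = G_B(\cdot)B^k v$ and the commutativity hypothesis \ref{ass:AB}.
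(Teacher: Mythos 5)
Your proposal is correct and follows essentially the same route as the paper's own proof: the same auxiliary function $f(t,x)=G_B(x-\omega_s)v_{s,x_0}(t)$, the same computation of $\partial_1 f$ and $\partial_2^k f$, an application of \autoref{lem:change_of_variable}, substitution of $\dot v_{s,x_0}=C(\cdot)v_{s,x_0}$ to cancel the $B^p$ terms against the $\d[\omega]^\pi_p$ integral, and hypothesis \ref{ass:AB} (plus commutativity of $B^p$ with $G_B$ on $\Dom B^p$) to move the operators past the group. The only difference is cosmetic ordering of the cancellation, so there is nothing to add.
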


\begin{proof}
Define $f: [s,T]\times \R\rightarrow V$ by 
	\begin{equation*}
		f(t,x) := G_B(x-\omega_s)v_{s,x_0}(t).
	\end{equation*}
For every $v\in\Dom B^p$, the map $x\mapsto G_B(x-\omega_s)v$ belongs to $\mathscr{C}^p(\R;V)$. Moreover, we have that $v_{s,x_0}$ belongs to $\mathscr{C}^1([s,T];V)$ and $v_{s,x_0}(t)\in \tilde{V}\subseteq\Dom C\subseteq \Dom B^p$ for every $t\in [s,T]$. It follows that $f\in\mathscr{C}^{1,p}([s,T]\times\R;V)$ and its partial derivatives are given by
	\begin{equation*}
		(\partial_1f)(t,x) = G_B(x-\omega_s)\dot{v}_{s,x_0}(t)
	\end{equation*}
and
	\begin{equation*}
		(\partial_2^kf)(t,x) = B^kG_B(x-\omega_s)v_{s,x_0}(t)
	\end{equation*}
for $(t,x)\in [s,T]\times\R$ and $k=1,2,\ldots, p$. Therefore, by \autoref{lem:change_of_variable} we have that 
	\begin{align*}
		G_B(\omega_t-\omega_s)v_{s,x_0}(t) & = x_0 + \int_s^t BG_B(\omega_r-\omega_s)v_{s,x_0}(r)\d^\pi\omega_r \\
		& \hspace{2cm} + \frac{1}{p!} \int_s^t B^pG_B(\omega_r-\omega_s)v_{s,x_0}(r)\d[\omega]_p^\pi(r)\\
		& \hspace{2cm} + \int_s^t G_B(\omega_r-\omega_s)\dot{v}_{s,x_0}(r)\d{r}
	\end{align*}
By using the fact that $v_{s,x_0}$ satisfies equation \eqref{eq:CP}, we have that
	\begin{align*}
		\int_s^t G_B(\omega_r-\omega_s)\dot{v}_{s,x_0}(r)\d{r} & = \int_s^t G_B(\omega_r-\omega_s)C(r)v_{s,x_0}(r)\d{r} \\
		& = \int_s^t G_B(\omega_r-\omega_s)\left(A-\frac{1}{p!}B^p\dot{[\omega]}^\pi_p(r)\right)v_{s,x_0}(r)\d{r}\\
		& = \int_s^tAG_B(\omega_r-\omega_s)v_{s,x_0}(r)\d{r} - \frac{1}{p!} \int_s^t B^pG_B(\omega_r-\omega_s)v_{s,x_0}(r)\d[\omega]^\pi_p(r)
	\end{align*}
where we successively used the assumptions of commutativity of $A$ and $G_B$ on the set $\tilde{V}$, continuous differentiability of $[\omega]^\pi_p$; and the fact that the operator $B^p$ commutes with the group $G_B$ on the set $\Dom B^p$, see \cite[Theorem 2.4]{Pazy}. The claim follows.
\end{proof}

\begin{example}
\label{ex:one-dim_eq}
Let $W^H=(W_t^H, t\in [0,1])$ be the fractional Brownian motion with Hurst parameter $H\in (0,1)$ defined on a probability space $(\Omega,\mathcal{F},\mathbb{P})$. By \autoref{ex:stoch_int_fBm} we have that $\mathbb{P}$-almost every sample path of $W^H$ has a finite $\sfrac{1}{H}$-th variation along the sequence $\tilde{D}$ (recall that $\tilde{D}$ is a subsequence of the sequence $D$ of dyadic partitions of the interval $[0,1]$). By \autoref{prop:linear_equation}, we obtain that for every $k\in\mathbb{N}$, the process $(Y_{k,1}(t),t\in [0,1])$ defined by the formula
	\begin{equation*}
		Y_{k,1}(t) := \mathrm{exp}\,\left\{bW^{\frac{1}{2k}}_t + \left(a-\frac{c_{2k}}{(2k)!}b^{2k}\right)t\right\},
	\end{equation*}
where $c_{2k}=\E|W^\frac{1}{2k}_1|^{2k}$ as before, satisfies the equation 
	\begin{equation*}
		Y_{k,1}(t) = 1 + \int_0^t aY_{k,1}(r)\d{r} + \int_0^t bY_{k,1}(r)\d^{\tilde{D}}\hspace{-0.5mm}W^\frac{1}{2k}_r
	\end{equation*}
for every $t\in [0,1]$ $\mathbb{P}$-almost surely (the second index is used to distinguish between the solutions that are obtained in this and in the following two examples). The formula for the process $Y_{1,1}$ is well-known from classical (It\^o's) stochastic calculus. In fact, if $k=1$, we have that $W^\frac{1}{2}$ is the Wiener process (that we denote by $W$ everywhere in the paper) and the process satisfies the equation 
	\begin{equation*}
		Y_{1,1}(t) = 1 + \int_0^t aY_{1,1}(r)\d{r} + \int_0^t bY_{1,1}(r)\d W_r
	\end{equation*}
for every $t\in [0,1]$ $\mathbb{P}$-almost surely where the integral $\int_0^t (...)\d W_r$ is the usual It\^o integral. On the other hand, already in the case $k=2$, the form of the solution differs from similar results that can be found in the literature. In particular, by \cite[Theorem 3.4]{Snup10} (see also \cite[Example 3.1]{Snup12}) it follows that the process $(\tilde{Y}_{2,1}(t), t\in [0,1])$ defined by 
	\begin{equation*}
		\tilde{Y}_{2,1}(t) := \mathrm{exp}\,\left\{b W_t^\frac{1}{4} + at - \frac{1}{2}b^2\sqrt{t}\right\}
	\end{equation*}
satisfies the equation 
	\begin{equation*}
		\tilde{Y}_{2,1}(t) = 1 + \int_0^ta \tilde{Y}_{2,1}(r)\d{r} + \int_0^t b \tilde{Y}_{2,1}(r)\bar{\delta} W_r^\frac{1}{4}
	\end{equation*}
for every $t\in [0,1]$ $\mathbb{P}$-almost surely where the integral $\int_0^t (...)\bar{\delta}W_r^\frac{1}{4}$ is the extension of the Skorokhod integral with respect to the fractional Brownian motion from \autoref{ex:stoch_int_fBm}, that is introduced in \cite{CheNua05}.
\end{example}

\begin{example}
\label{ex:inf-dim_eq_bdd_B}
More generally, consider the problem 
	\begin{equation}
	\label{eq:equation}
		\begin{cases}
			\d X_t & = AX_t\d{t} + bX_t\d^{\tilde{D}}W^\frac{1}{2k}, \quad 0<t\leq 1,\\
			\phantom{\d}X_0 & = x_0,
		\end{cases}
	\end{equation}
where $A: \Dom A\subseteq V\rightarrow V$ is an infinitesimal generator of a strongly continuous semigroup $S_A$ of bounded linear operators acting on the space $V$ and $b\in\mathbb{R}\setminus\{0\}$. 

\medskip

As in the above \autoref{ex:one-dim_eq}, the $\sfrac{1}{2k}$-th variation of almost every sample path of the fractional Brownian motion $W^{\sfrac{1}{2k}}$ along $\tilde{D}$ is continuously differentiable. The operator $B:=b\,\mathrm{Id}_V$ generates a strongly continuous group $G_{b\mathrm{Id}_V}$ given by $G_{b\mathrm{Id}_V}(t)=\mathrm{e}^{bt}\mathrm{Id}_V$ for $t\in\R$ so that \autoref{ass:1} is satisfied. The space $\Dom C$ that is defined by \eqref{eq:dom_C} equals $\Dom A$ and we will show that the remaining assumptions of \autoref{prop:linear_equation} are satisfied with $\tilde{V}=\Dom A$. We have that $\Dom A$ is closed under the action of the group $G_{b\mathrm{Id}_V}$ since the action of this group is only a multiplication by $\mathrm{e}^{bt}$. By the same argument, it follows that condition \ref{ass:AB} is satisfied as well. The system of operators $C(t)$ that is defined by \eqref{eq:C} is independent of $t$ and it is given by $C(t)=C$ where $C$ is given by \[C:=A-\frac{c_{2k}}{(2k)!}b^{2k}\mathrm{Id}_V\] on $\Dom A$. The operator $C$ generates a strongly continuous semigroup $S_C$ by \cite[Theorem 1.1 in chapter 3]{Pazy} that is given by \[S_C(t) = \mathrm{exp}\,\left\{-\frac{c_{2k}}{(2k)!}b^{2k}\,t\right\}S_A(t)\] for $t\geq 0$ by \cite[formula (1.2) on p. 77]{Pazy}. This semigroup maps $\Dom A$ back into itself by \cite[Theorem 2.4, c), in chapter 1]{Pazy} and so that for every $x_0\in\Dom A$, there is a $\Dom A$-valued solution to the problem
	\begin{equation*}
		\begin{cases}
		\dot{v}(t) & = Cv(t), \quad 0< t\leq 1,\\
		v(0) & = x_0,
		\end{cases}
	\end{equation*}
that is given by $v_{0,x_0}(t)=S_C(t)x_0$. Consequently, for every $x_0\in\Dom A$, it follows by \autoref{prop:linear_equation} that the process  
	\begin{equation*}
		Y_{k,2}(t) := \mathrm{exp}\,\left\{bW^\frac{1}{2k}_t - \frac{c_{2k}}{(2k)!}b^{2k}\,t\right\}S_A(t)x_0, \quad 0\leq t\leq 1,
	\end{equation*}
satisfies the equation
	\begin{equation*}
		Y_{k,2}(t) = x_0 + \int_0^t AY_{k,2}(r)\d{r} + \int_0^tbY_{k,2}(r)\d^{\tilde{D}}W^{\frac{1}{2k}}_r
	\end{equation*}
for every $t\in [0,1]$ $\mathbb{P}$-almost surely. An example to which this result can be applied is the initial-boundary value problem for the heat equation that is formally described by 
	\begin{equation*}
		(\partial_tu)(t,x) = (\Delta_x u)(t,x) + bu(t,x)\dot{W}_t^\frac{1}{2k}
	\end{equation*}
for $(t,x)\in [0,1]\times\mathcal{O}$ where $\mathcal{O}\subset\R^d$, $d\in\mathbb{N}$, is a bounded domain with smooth boundary $\partial\mathcal{O}$ and $\Delta_x$ is the Lapace operator. The equation is subject to the initial condition $u(0,x)=u_0$ and the Dirichlet boundary condition
	\begin{equation*}
		u(t,x)=0
	\end{equation*}
or the Neumann boundary condition 
	\begin{equation*}
		\frac{\partial u}{\partial\nu}(t,x) = 0
	\end{equation*}
for $(t,x)\in [0,1]\times\partial\mathcal{O}$. In the Neumann problem, the symbol $\frac{\partial}{\partial\nu}$ denotes the conormal derivative. The above problem is rigorously interpreted as equation \eqref{eq:equation} by setting $V:=L^2(\mathcal{O})$, $A:=\Delta_x$ on $\Dom A:=W^{2,2}(\mathcal{O})\cap W^{1,2}_0(\mathcal{O})$ for the Dirichlet problem and on $\Dom A:=\{f\in W^{2,2}(\mathcal{O})\,\left|\right.\, \frac{\partial f}{\partial\bm{\nu}}=0\mbox{ on } \partial\mathcal{O}\}$ for the Neumann problem.
\end{example}

\begin{example}
\label{ex:B_derivative}
Consider the formal equation 
	\begin{equation*}
		(\partial_tu)(t,x) = a(\partial_x^2u)(t,x) + b(\partial_xu)(t,x)\dot{W}_t^{\sfrac{1}{2k}}
	\end{equation*}
for $(t,x)\in [0,1]\times\R$ with the initial condition $u(0,x)=u_0(x)$ for $x\in\R$ where $a\in\R$ and $b\in\mathbb{R}\setminus\{0\}$. Here again, the process $\dot{W}^{\sfrac{1}{2k}}$ is the formal time derivative of the fractional Brownian motion with the Hurst parameter $H=\sfrac{1}{2k}$ for some $k\in\mathbb{N}$ that is defined on a probability space $(\Omega,\mathcal{F},\mathbb{P})$. 

\medskip

Rigorously, the above problem can be written in our framework by setting $\omega:=W^{\sfrac{1}{2k}}$, $V:=L^2(\R)$, $A:= a\partial^{2}$ on $\Dom A:=W^{2,2}(\R)$ (where $[(a\partial^2)f](x):=af''(x)$ for $f\in W^{2,2}(\R)$), $B:= b\partial$ on $\Dom B:=W^{1,2}(\R)$ (where $[(b\partial)f](x):=bf'(x)$ for $f\in W^{1,2}(\R)$). Below, it is shown that this equation can be solved by our method if $k=1$ and $a>\sfrac{b^2}{2}$ or if $k$ is an even positive integer, i.e. if the driving signal is either the Wiener process or a fractional Brownian motion with Hurst parameter $H=\sfrac{1}{4m}$ for $m\in\mathbb{N}$. 

\medskip

As in \autoref{ex:stoch_int_fBm}, we have that $[W^{\sfrac{1}{2k}}]_{2k}^{\tilde{D}}(t)=c_{2k}t$ which is a continuously differentiable function. The operator $b\partial$ generates a strongly continuous group $G_{b\partial}$ on the space $L^2(\R)$ that is given by 
	\begin{equation*}
		[G_{b\partial}(t)f](x) = f(x+bt)
	\end{equation*}
for $t,x\in\R$ and $f\in L^2(\R)$, see e.g. \cite[Proposition 1 on p. 66]{EngNag00}, and hence, \autoref{ass:1} is satisfied. The operators $C(t)$ defined by \eqref{eq:C} are independent of $t$ and they are given by $C(t)=C_k$ where
	\begin{equation}
	\label{eq:C_ex}
		C_k=a\partial^2- \frac{c_{2k}}{(2k)!}b^{2k}\partial^{2k}
	\end{equation}
on the space $\Dom C= W^{2k,2}(\R)$ and since $G_{b\partial}(t)$ is simply a shift operator for every $t\in\R$, it maps $W^{2k,2}(\R)$ back into itself. We may therefore set $\tilde{V}:=W^{2k,2}(\R)$. We have that the equality 
	\begin{equation*}
		[G_{b\partial}(t)(a\partial^2)f](x) = a f''(x+bt) = [(a\partial^2)G_{b\partial}(t)f](x)
	\end{equation*}
holds for every $f\in W^{2k,2}(\R)$ and every $t,x\in\R$ so that condition \ref{ass:AB} is satisfied as well. It remains to show that the corresponding homogeneous initial value problem \eqref{eq:CP} has a $\tilde{V}$-valued solution. Below, we split the reasoning into two cases that will be treated separately.

\medskip

\textit{The Wiener case ($k=1$).} If $k=1$, we have that the operator $C_1$ is given by  $C_1=c\partial^2$ with $c:=a-\frac{b^2}{2}$ and it is defined on the domain $\Dom C_1 = W^{2,2}(\R)$. If $c>0$, then the operator $-c\partial^2$ is strongly elliptic, see \cite[Definition 2.1]{Pazy}, and hence, by \cite[Theorem 2.7 and Remark 2.9 in chapter 7]{Pazy} it follows that the operator $c\partial^2$ generates an analytic semigroup $S_{c\partial^2}$ of bounded linear operators on the space $L^2(\R)$. This is of course the scaled heat semigroup given by 
	\begin{equation*}
		[S_{c\partial^2}(t)f](x) = \frac{1}{\sqrt{4\pi ct}}\int_{\R} \mathrm{e}^{-\frac{(x-y)^2}{4ct}}f(y)\d{y}
	\end{equation*}
for $t> 0$, $x\in\R$, and $f\in L^2(\R)$. Hence, we have that for every $f\in W^{2,2}(\R)$, there is a $W^{2,2}(\mathbb{R})$-valued solution to the problem
	\begin{equation*}
		\begin{cases}
			\dot{v}(t) & = C_1 v(t), \quad 0<t\leq 1,\\
			v(0) & = f
		\end{cases}
	\end{equation*}
that is given by $v_{0,f}(t) = S_{c\partial^2}(t)f$. Consequently, by \autoref{prop:linear_equation}, we obtain that the process
	\begin{equation*}
		Y_{1,3}(t) := G_{b\partial}(W_t)S_{c\partial^2}(t)f
	\end{equation*}
satisfies the equation 
	\begin{equation*}
		Y_{1,3}(t) = f + \int_0^t a\partial^2 Y_{1,3}(r)\d{r} + \int_0^tb\partial Y_{1,3}(r)\d^{\tilde{D}}W_r
	\end{equation*}
for every $t\in [0,1]$ $\mathbb{P}$-almost surely. Moreover, in this case, there is the explicit formula 
	\begin{equation*}
		Y_{1,3}(t,x) = \frac{1}{\sqrt{4\pi ct}}\int_{\R}\mathrm{e}^{-\frac{(x-y)^2}{4ct}}f(y+bW_t)\d{y}, \quad t\in (0,1], x\in\R.
	\end{equation*}

\textit{The singular fractional Brownian motion case $(k>1)$.} In this case, the operator $-C_k$ that is given by \eqref{eq:C_ex} is strongly elliptic if $k$ is an even integer. For $k=2m$, $m\in\mathbb{N}$, it follows by \cite[Theorem 2.7 and Remark 2.9 in chapter 7]{Pazy} that the operator $C_{2m}$ generates an analytic semigroup $S_{C_{2m}}$ of bounded linear operators acting on the space $L^2(\R)$. Therefore, as in the Wiener case above, we have by \autoref{prop:linear_equation} that for every $f\in W^{4m,2}(\R)$, the $W^{4m,2}(\R)$-valued process 
	\begin{equation*}
		Y_{2m,3}(t) := G_{b\partial}(W^\frac{1}{4m}_t) S_{C_k}(t)f
	\end{equation*}
satisfies the equation
	\begin{equation*}
		Y_{2m,3}(t) = f + \int_0^t a\partial^2 Y_{2m,3}(r)\d{r} + \int_0^t b\partial Y_{2m,3}(r)\d^{\tilde{D}}W^{\frac{1}{4m}}_r
	\end{equation*}
for every $t\in [0,1]$ $\mathbb{P}$-almost surely.
\end{example}

\subsection{The non-commutative case}
\label{sec:non-comm}

In what follows, we show that the commutativity assumption \ref{ass:AB} in \autoref{prop:linear_equation} can be weakend in the spirit of \cite{DaPIanTub82} (see also \cite[section 6.5]{DaPratoZab14}). Set $z_{s,\omega}(t):=G_B(\omega_t-\omega_s)$ for $t\in[s,T]$ and note that for every $t\in [s,T]$, the operator $z_{s,\omega}(t)$ is invertible with $z_{s,\omega}^{-1}(t) = G_B(\omega_s-\omega_t)$. Consider the homogeneous initial value problem   
	\begin{equation}
	\label{eq:NCP}
	\tag{NCP}
	\begin{cases}
		\dot{v}(t) & = z_{s,\omega}^{-1}(t)C(t)z_{s,\omega}(t)v(t), \quad 0\leq s<t\leq T, \\
		v(s)& = x_0.
	\end{cases}
	\end{equation}
Similarly as in \autoref{prop:linear_equation} it is now shown that if problem \eqref{eq:NCP} has a solution, then the bilinear problem \eqref{eq:BCP} also has a solution; however, without assuming that $A$ and $G_B$ commute. More precisely, we have the following result:

\begin{proposition}
\label{prop:linear_equation_without_commutativity}
Let \autoref{ass:1} be verified. Assume that there exists a $\tilde{V}$-valued solution $v_{s,x_0}$ to problem \eqref{eq:NCP} with $\tilde{V}$ being a subset of $\Dom C$ that is closed under the action of the group $G_B$, i.e. $G_B\tilde{V}\subseteq \tilde{V}$. Then the function $X_{s,x_0}^\omega:[s,T]\rightarrow V$ defined by 
	\begin{equation*}
		X_{s,x_0}^\omega(t) := z_{s,\omega}(t)v_{s,x_0}(t)
	\end{equation*}
is a strong solution to the problem \eqref{eq:BCP}.
\end{proposition}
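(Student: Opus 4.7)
The plan is to mimic the proof of Proposition \ref{prop:linear_equation} via the change-of-variable formula of Lemma \ref{lem:change_of_variable}, the point being that the conjugation by $z_{s,\omega}$ built into \eqref{eq:NCP} replaces what the commutativity condition \ref{ass:AB} achieved in the previous proposition. First I would define the auxiliary function $f:[s,T]\times\R\to V$ by $f(t,x):=G_B(x-\omega_s)v_{s,x_0}(t)$ and verify that $f\in\mathscr{C}^{1,p}([s,T]\times\R;V)$: this follows because $v_{s,x_0}\in\mathscr{C}^1([s,T];V)$ takes values in $\tilde V\subseteq\Dom B^p$ and, for any $y\in\Dom B^p$, the map $x\mapsto G_B(x-\omega_s)y$ is of class $\mathscr{C}^p(\R;V)$. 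The partial derivatives are then
$$(\partial_1 f)(t,x)=G_B(x-\omega_s)\,\dot v_{s,x_0}(t),\qquad(\partial_2^k f)(t,x)=B^k G_B(x-\omega_s)v_{s,x_0}(t),\quad k=1,\ldots,p,$$
and by construction $f(t,\omega_t)=X_{s,x_0}^\omega(t)$.

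Next I would apply Lemma \ref{lem:change_of_variable} to $f$ and $\omega$ on $[s,t]$. The crucial step is to simplify the resulting Lebesgue integrand $(\partial_1 f)(r,\omega_r)=G_B(\omega_r-\omega_s)\dot v_{s,x_0}(r)$. Using that $v_{s,x_0}$ solves \eqref{eq:NCP} together with $z_{s,\omega}(r)z_{s,\omega}^{-1}(r)=\mathrm{Id}$,
$$G_B(\omega_r-\omega_s)\,\dot v_{s,x_0}(r)=z_{s,\omega}(r)\,z_{s,\omega}^{-1}(r)C(r)\,z_{s,\omega}(r)v_{s,x_0}(r)=C(r)X_{s,x_0}^\omega(r),$$
which by the definition of $C(r)$ equals $AX_{s,x_0}^\omega(r)-\tfrac{1}{p!}\dot{[\omega]}_p^\pi(r)B^pX_{s,x_0}^\omega(r)$. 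Since $[\omega]_p^\pi$ is continuously differentiable, the Stieltjes integral $\tfrac{1}{p!}\int_s^t B^pX_{s,x_0}^\omega(r)\,d[\omega]_p^\pi(r)$ coming out of Lemma \ref{lem:change_of_variable} reduces to $\tfrac{1}{p!}\int_s^t\dot{[\omega]}_p^\pi(r)B^pX_{s,x_0}^\omega(r)\,dr$ and exactly cancels the $B^p$ term, leaving the desired identity
$$X_{s,x_0}^\omega(t)=x_0+\int_s^t AX_{s,x_0}^\omega(r)\,dr+\int_s^t BX_{s,x_0}^\omega(r)\,d^\pi\omega_r.$$

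The remaining checks are of a bookkeeping nature. The $G_B$-invariance of $\tilde V$ ensures that $X_{s,x_0}^\omega(r)=G_B(\omega_r-\omega_s)v_{s,x_0}(r)\in\tilde V\subseteq\Dom C\subseteq\Dom A\cap\Dom B^p$, so that $AX_{s,x_0}^\omega(r)$, $BX_{s,x_0}^\omega(r)$ and $B^pX_{s,x_0}^\omega(r)$ all make sense pointwise; their continuity in $r$ yields Lebesgue integrability, and admissibility of $r\mapsto BX_{s,x_0}^\omega(r)$ as an integrand against $\omega$ in the sense of Definition \ref{def:admissible_integrands} is immediate from the construction of $f$, since $BX_{s,x_0}^\omega(r)=(\partial_2 f)(r,\omega_r)$. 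The only genuinely new feature relative to Proposition \ref{prop:linear_equation} is that we can no longer move $A$ through $G_B$; this obstacle is resolved structurally rather than analytically, by the very choice of generator $z_{s,\omega}^{-1}Cz_{s,\omega}$ in \eqref{eq:NCP}, which is tailored so that the outer factor $G_B(\omega_r-\omega_s)$ produced by $\partial_1 f$ cancels the inner $z_{s,\omega}^{-1}(r)$ and leaves $C(r)$ acting directly on $X_{s,x_0}^\omega(r)$.
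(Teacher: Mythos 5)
Your proposal is correct and follows essentially the same route as the paper's proof: define $f(t,x)=G_B(x-\omega_s)v_{s,x_0}(t)$, apply \autoref{lem:change_of_variable}, and use the conjugated form of the generator in \eqref{eq:NCP} so that $z_{s,\omega}(r)\dot v_{s,x_0}(r)=C(r)X^{\omega}_{s,x_0}(r)$, whose $B^p$ part cancels the $p$-th variation term. The paper simply compresses the regularity and domain bookkeeping by referring back to the proof of \autoref{prop:linear_equation}, whereas you spell it out; the substance is identical.
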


\begin{remark}
Note that \autoref{prop:linear_equation_without_commutativity} is a generalization of \autoref{prop:linear_equation}. Indeed, if condition \ref{ass:AB} is satisfied, then problem \eqref{eq:NCP} becomes problem \eqref{eq:CP}.
\end{remark}

\begin{proof}[Proof of \autoref{prop:linear_equation_without_commutativity}]
Define $f: [s,T]\times\R\rightarrow V$ by 
	\begin{equation*}
		f(t,x):= G_B(x-\omega_s)v_{s,x_0}(t).
	\end{equation*}
As in the proof of \autoref{prop:linear_equation}, we apply \autoref{lem:change_of_variable} to obtain 
	\begin{align*}
		z_{s,\omega}(t)v_{s,x_0}(t) & = x_0 + \int_s^t Bz_{s,\omega}(r)v_{s,x_0}(r)\d^\pi\omega_r\\
		& \hspace{2cm} + \frac{1}{p!} \int_s^t B^p z_{s,\omega}(r)v_{s,x_0}(r)\d[\omega]^\pi_p(r)\\
		& \hspace{2cm} + \int_s^t z_{s,\omega}(r)\dot{v}_{s,x_0}(r)\d{r}.
	\end{align*}
By using the fact that $v_{s,x_0}$ satisfies equation \eqref{eq:NCP}, we have that 
	\begin{align*}
		\int_s^t z_{s,\omega}(r)\dot{v}_{s,x_0}(r)\d{r} & = \int_s^t z_{s,\omega}(r)z_{s,\omega}^{-1}(r)C(r)z_{s,\omega}(r)v_{s,x_0}(r)\d{r}\\
		& = \int_s^t \left(A-\frac{1}{p!}\dot{[\omega]}_p^\pi(r)B^p\right)z_{s,\omega}(r)v_{s,x_0}(r)\d{r}\\
		& = \int_s^t Az_{s,\omega}(r)v_{s,x_0}(r)\d{r} -\frac{1}{p!}\int_s^t B^pz_{s,\omega}(r)v_{s,x_0}(r)\d[\omega]_p^\pi(r)
	\end{align*}
which proves the claim.
\end{proof}

\begin{remark}
In \autoref{prop:linear_equation_without_commutativity}, the solution $X_{s,x_0}^\omega$ belongs to the space $\mathscr{C}([s,T];V)$. 
\end{remark}

In the following two subsections, we give sufficient conditions for existence of a strong solution to the problem \eqref{eq:BCP} by solving the equation \eqref{eq:NCP} in two cases: in the parabolic case and in the hyperbolic case.

\subsubsection{The parabolic non-commutative case}

In what follows we give some sufficient conditions for the solvability of the system  \eqref{eq:NCP} in what is usually called the parabolic case. Recall that for any linear operator $K:\Dom K\subseteq V\rightarrow V$, the symbol $\rho(K)$ denotes the \textit{resolvent set}, i.e. the set of all $\lambda\in\mathbb{C}$ such that the operator $(\lambda\mathrm{Id}_V-K)$ has bounded inverse, and the operator $R(\lambda:K):=(\lambda\mathrm{Id}_V-K)^{-1}$ defined for $\lambda\in\rho(K)$ is the \textit{resolvent operator}.
We say that a family of operators $(K(t),t\in [0,T])$ is \textit{parabolic}\footnote{The term \textit{parabolic} is used since assumption \ref{ass:P2} says that the operators $K(t)$ are sectorial.} if the following three conditions are satisfied:
	\begin{enumerate}[label=(P\arabic*)]
		\itemsep0em
		\item\label{ass:P1} The family $(K(t),t\in [0,T])$ consists of closed linear operators on the space $V$ that are defined on a common domain $D$ which is independent of $t$ and dense in $V$.
		\item\label{ass:P2} There exists $\lambda_K\in\mathbb{R}$ such that, for every $t\in [0,T]$, the resolvent set of the operator $K(t)$ contains the half-plane \[\mathbb{C}^+_{\lambda_K}:=\{\lambda\in\mathbb{C}\,\left|\right. \Re[\lambda]\geq \lambda_K\}\] and there exists a constant $M>0$ such that the inequality
			\begin{equation*}
				\left\|R(\lambda: K(t))\right\|_{\mathscr{L}(V)} \leq \frac{M}{1+ |\lambda-\lambda_K|}
			\end{equation*}
		is satisfied for every $\lambda\in\mathbb{C}^+_{\lambda_K}$ and $t\in [0,T]$. 
		\item\label{ass:P3} There exist constants $L>0$ and $0<\alpha\leq 1$ such that inequality 
			\begin{equation*}
				\left\| [K(t)-K(s)](\lambda_K\mathrm{Id}_V-K(0))^{-1}\right\|_{\mathscr{L}(V)}\leq L|t-s|^\alpha
			\end{equation*}
		is satisfied for every $s,t\in [0,T]$. 
	\end{enumerate}

\begin{proposition}
\label{prop:parabolic_noncom}
Assume that $\omega$ satisfies the following two regularity conditions:
\begin{enumerate}[label=\normalfont (p\arabic*)]
\itemsep0em
	\item\label{ass:omega_1} There exists constants $k_1>0$ and $0<\gamma_1< 1$ such that 
		\begin{equation*}
			|\omega_u-\omega_v|\leq k_1|u-v|^{\gamma_1}
		\end{equation*}
	holds for every $u,v\in [0,T]$.
		\item\label{ass:omega_2} The $p$-th variation of the path $\omega$ is continuously differentiable on the interval $[0,T]$ with the derivative being denoted by $\dot{[\omega]}^\pi_p$. Moreover, there exist constants $k_2>0$ and $0<\gamma_2\leq 1$ such that 
		\begin{equation*}
			\left|\dot{[\omega]}_p^\pi(u)-\dot{[\omega]}_p^\pi(v)\right| \leq k_2 |u-v|^{\gamma_2}
		\end{equation*}	
	is satisfied for $u,v\in [0,T]$.
\end{enumerate}
Assume further, that the operator $B$ satisfies the following two conditions:
\begin{enumerate}[label=\normalfont (p\arabic*)]
\itemsep0em
\setcounter{enumi}{2}
	\item\label{ass:B_1} The operator $B$ is an infinitesimal generator of a strongly continuous group of bounded linear operators $G_B$ acting on the space $V$.
	\item\label{ass:B_2} The operator $B^p$ is closed.
\end{enumerate}
Assume that the space $\Dom C$ defined by formula \eqref{eq:dom_C} and the family $(C(t), t\in [0,T])$ defined by formula \eqref{eq:C} satisfy the following two conditions:
\begin{enumerate}[label=\normalfont (p\arabic*)]
\itemsep0em
\setcounter{enumi}{4}
	\item\label{ass:C_1} The set $\Dom C$ is dense in the space $V$ and for every $t\in [0,T]$, the operator $C(t)$ is closed.
	\item\label{ass:C_2} There exists $\lambda_0\in\mathbb{R}$ such that the resolvent set $\rho(C(t))$ contains the half-plane $\mathbb{C}^+_{\lambda_0}$ for every $t\in [0,T]$ and there exists a constant $k_3>0$ such that the inequality 
		\begin{equation*}
			\|R(\lambda: C(t))\|_{\mathscr{L}(V)} \leq \frac{k_3}{1+|\lambda-\lambda_0|}
		\end{equation*}
	holds for every $\lambda\in\mathbb{C}^+_{\lambda_0}$ and $t\in [0,T]$. 
\end{enumerate}
Finally, assume that $B$ and the family $(C(t), t\in [0,T])$ are connected in the following manner:
\begin{enumerate}[label=\normalfont (p\arabic*)]
\itemsep0em
\setcounter{enumi}{6}
	\item\label{ass:BC} There exist $\theta\in\mathbb{R}$, $K\subseteq\Dom B$, and a family $(L(t), t\in[0,T])$ of bounded linear operators acting on the space $V$ such that $K$ is the core of $B$ and such that for every $t\in [0,T]$, $\theta\in\rho(C(t))$ and the equality
		\begin{equation*}
			 [\theta-C(t)]B[\theta-C(t)]^{-1}z = [B+ L(t)]z
		\end{equation*}
	is satisfied for every $z\in K$. 
\end{enumerate}
For $\alpha\geq 0$ and $r\in [0,T]$, denote by  $D_\alpha(r)$ the domain of the fractional power\footnote{The fractional powers can be defined since assumptions \ref{ass:C_1} and \ref{ass:C_2} imply that the operator $C(r)$ defines an analytic semigroup of bounded linear operators on $V$ and that the operator $\lambda_0\mathrm{Id}_V-C(r)$ is positive, see e.g. \cite[section 2.6]{Pazy}.} $(\lambda_0\mathrm{Id}_V-C(r))^\alpha$ of the operator $\lambda_0\mathrm{Id}_V-C(r)$. Let $s\in [0,T)$ and $x_0\in V$. If there exists $\varepsilon>0$ such that $x_0\in D_{1+\varepsilon}(s)$, then there is a strong solution to the bilinear problem \eqref{eq:BCP}. 
\end{proposition}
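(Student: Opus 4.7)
By Proposition~\ref{prop:linear_equation_without_commutativity}, the task reduces to constructing a solution $v_{s,x_0}$ to \eqref{eq:NCP} that takes values in some $G_B$-invariant subset $\tilde{V}\subseteq \Dom C$. Set
\[
 \tilde{C}_s(t) := G_B(\omega_s-\omega_t)\, C(t)\, G_B(\omega_t-\omega_s), \quad t\in [s,T],
\]
so that \eqref{eq:NCP} reads $\dot{v}=\tilde{C}_s(t) v$. The plan is to verify that the family $\{\tilde{C}_s(t)\}_{t\in[s,T]}$ is parabolic in the sense of \ref{ass:P1}--\ref{ass:P3}, and then invoke the classical Pazy--Tanabe theory of parabolic evolution systems (see \cite{Pazy}, Chapter~5, or \cite{Tan79}) to produce an evolution system $U(t,\tau)$ whose orbit $v_{s,x_0}(t):=U(t,s)x_0$ is a classical solution whenever $x_0\in D_{1+\varepsilon}(s)$.

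For \ref{ass:P1}, the first step is to show that $G_B(r)$ preserves $\Dom C$ for every $r\in\R$. Hypothesis \ref{ass:BC}, which can be reread as the commutation identity $[B,C(t)]=-L(t)[\theta-C(t)]$ on a suitable dense set, implies by iteration that $B^k[\theta-C(t)]^{-1}$ extends boundedly for $k\leq p$, and a standard closure argument then gives $G_B(r)\Dom C\subseteq \Dom C$. Thus $\Dom \tilde{C}_s(t)=\Dom C$ is independent of $t$ and dense in $V$ by \ref{ass:C_1}, yielding \ref{ass:P1}. Sectoriality \ref{ass:P2} for $\tilde{C}_s(t)$ then transfers from \ref{ass:C_2} through the similarity identity
\[
 R(\lambda,\tilde{C}_s(t)) = G_B(\omega_s-\omega_t)\, R(\lambda,C(t))\, G_B(\omega_t-\omega_s),
\]
together with the uniform bound $\sup_{t\in[s,T]}\|G_B(\omega_t-\omega_s)\|_{\mathscr{L}(V)}<\infty$, ensured by continuity of $\omega$ on $[0,T]$ and the exponential-type estimate for $C_0$-groups.

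The bulk of the work goes into the Hölder condition \ref{ass:P3}. I would decompose $\tilde{C}_s(t)-\tilde{C}_s(r)$ into a scalar part proportional to $\dot{[\omega]}_p^\pi(t)-\dot{[\omega]}_p^\pi(r)$ and a conjugation part involving the increment $G_B(\omega_t-\omega_s)-G_B(\omega_r-\omega_s)$. The former is controlled directly by \ref{ass:omega_2}, since $B^p(\lambda_0-C(s))^{-1}$ is bounded thanks to \ref{ass:B_2} and the resolvent estimate in \ref{ass:C_2}. For the latter, the identity $G_B(a)-G_B(b)=\int_b^a BG_B(u)\,\mathrm{d}u$ on $\Dom B$ together with \ref{ass:omega_1} produces increments of order $|t-r|^{\gamma_1}$, but only once the resulting occurrences of $B$ are pushed through $C(t)$; this is precisely where \ref{ass:BC} enters, replacing every commutator $[B,C(t)]$ by the bounded operator $L(t)[\theta-C(t)]$ and so bounding the whole expression in terms of $(\lambda_0-\tilde{C}_s(s))^{-1}$.

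Once \ref{ass:P1}--\ref{ass:P3} are verified, the Pazy--Tanabe theorem provides a unique parabolic evolution system $U(t,\tau)$, and $v_{s,x_0}(t):=U(t,s)x_0$ is a classical $\Dom C$-valued solution to \eqref{eq:NCP} for every $x_0\in D_{1+\varepsilon}(s)$. Taking $\tilde{V}:=\Dom C$, which is $G_B$-invariant by the first step, Proposition~\ref{prop:linear_equation_without_commutativity} then delivers the strong solution $X^\omega_{s,x_0}(t)=G_B(\omega_t-\omega_s)v_{s,x_0}(t)$ to \eqref{eq:BCP}. The hardest point is clearly the Hölder estimate on the conjugation part of $\tilde{C}_s(t)-\tilde{C}_s(r)$: the low Hölder exponent $\gamma_1$ of $\omega$ combined with the unboundedness of both $A$ and $B^p$ forces an iterated use of \ref{ass:BC} to trade every occurrence of $B$ for either a bounded $L(t)$-factor or a resolvent of $C(t)$, and it is this interplay that makes \ref{ass:BC} genuinely indispensable rather than merely technical.
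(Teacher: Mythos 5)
Your overall strategy coincides with the paper's: conjugate $C(t)$ by the group to form $\tilde{C}_s(t)$, verify \ref{ass:P1}--\ref{ass:P3}, and feed the resulting evolution system into \autoref{prop:linear_equation_without_commutativity}. Your decomposition for \ref{ass:P3} into a scalar part (controlled by \ref{ass:omega_2} and the boundedness of $B^p C^{-1}(t)$, which incidentally follows at once from the closed graph theorem because $\Dom C\subseteq\Dom B^p$ by definition --- no iteration of \ref{ass:BC} is needed for that) and a conjugation part (controlled by \ref{ass:omega_1} and \ref{ass:BC}) is exactly the paper's split into the terms $\mathrm{(I)}$ and $\mathrm{(II)}$; the paper handles the conjugation part by first upgrading \ref{ass:BC} to the intertwining $C(t)G_B(x)C^{-1}(t)=G_{B+L(t)}(x)$ and then using the perturbation formula for the difference $G_{B+L(t)}(x)-G_B(x)$, which also yields the $G_B$-invariance of $\Dom C$ more cleanly than your proposed iteration of commutators.

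There is, however, a genuine gap at the final step. You invoke the Pazy--Tanabe theorem and declare that $v_{s,x_0}(t):=U(t,s)x_0$ is a classical solution ``whenever $x_0\in D_{1+\varepsilon}(s)$,'' but the evolution-system theorem for a parabolic family only gives $U(\cdot,s)x_0\in\mathscr{C}^1((s,T];V)\cap\mathscr{C}([s,T];V)$, i.e.\ differentiability strictly away from the initial time, and it gives this for \emph{every} $x_0\in V$. The notion of solution required by \autoref{prop:linear_equation_without_commutativity} (via \autoref{lem:change_of_variable}, which needs $\partial_1 f$ to extend continuously to the closed rectangle) is $v\in\mathscr{C}^1([s,T];V)$, with the derivative continuous up to $t=s$. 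This is precisely where the hypothesis $x_0\in D_{1+\varepsilon}(s)$ is consumed, and your proof never explains how: the paper's Steps~3--4 introduce the forcing term $f_{s,x_0}(t)=[\tilde{C}_s(t)-\tilde{C}_s(s)]S_{\tilde{C}_s(s)}(t-s)x_0$, prove it is H\"older continuous on $[s,T]$ (the estimate of $\|(-\tilde{C}_s(s))[S_{\tilde{C}_s(s)}(u-s)-S_{\tilde{C}_s(s)}(v-s)]x_0\|_V$ is exactly where $(-\tilde{C}_s(s))^{1+\varepsilon}x_0$ must make sense), solve the inhomogeneous problem with zero initial datum, and add back $S_{\tilde{C}_s(s)}(t-s)x_0$ to produce a solution of \eqref{eq:NCP} that is $\mathscr{C}^1$ on all of $[s,T]$. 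Without this (or an equivalent initial-time regularity argument), the appeal to \autoref{prop:linear_equation_without_commutativity} is not justified, so your proof as written does not close.
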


\begin{remark}
\autoref{prop:parabolic_noncom} is reminiscent of \cite[Proposition 1]{DaPIanTub82} and the proof technique is taken from there. There are, however, several reasons why it seems necessary to include the full proof here. 
	\begin{itemize}
	\itemsep0em
		\item The first difference is that \autoref{prop:parabolic_noncom} allows for a large number of driving paths $\omega$ while \cite[Proposition 1]{DaPIanTub82} only deals with the case when $\omega$ is a path of the Wiener process. 
		\item The second difference is that in \cite[Proposition 1]{DaPIanTub82}, the operator $C(t)$ does not depend on $t$ since the usual quadratic variation of the Wiener process has constant derivative. As it is seen from the following proof, the dependence of $C(t)$ on $t$ causes some technical difficulties. 
		\item Finally, in \cite[Proposition 1]{DaPIanTub82}, it could be assumed that $\Dom A\subseteq\Dom B^2$ since typically, the operator $A$ exhibits ``worse" behaviour than $B^2$; however, in our case, the situation is in many cases reversed. This is already seen in \autoref{ex:B_derivative} where $B$ is a first-order differential operator so that $B^p$ is $p$-th order differential operator (recall that $p$ is a positive even integer so that $p\geq 2$) and $A$ is a second-order differential operator so that $\Dom B^p\subseteq\Dom A$. Because of this, the roles of $\Dom A$ and $\Dom B^p$ are symmetric in \autoref{prop:parabolic_noncom}.
	\end{itemize}
\end{remark}

\begin{proof}[Proof of \autoref{prop:linear_equation_without_commutativity}]
Without loss of generality, it is assumed that $\lambda_0=0$. Moreover, in order to simplify the exposition, only the case $\theta=0$ shall be proved. We divide the proof into several steps.
\medskip\\
\textit{Step 1:} Let $s\in [0,T)$ be fixed for the rest of the proof. First it is shown that the space $\Dom C$ is closed under the action of the group $G_B$ so that the operator 
	\begin{equation*}
		\tilde{C}_s(t):= z_{s,\omega}^{-1}(t)C(t)z_{s,\omega}(t) = G_B^{-1}(\omega_t-\omega_s)C(t)G_B(\omega_t-\omega_s)
	\end{equation*}
on $\Dom \tilde{C}_s(t) =\Dom C$ is well-defined for every $t\in [0,T]$. Indeed, note that the equality
	\begin{equation}
	\label{eq:factor_resolvent_C}
		C(t)R(\lambda: B)C^{-1}(t) = R(\lambda:B+L(t))
	\end{equation}
holds for every $t\in [0,T]$ and $\lambda\in \rho(B)\cap\rho(B+L(t))$. This is proved in the same manner as \cite[formula (5)]{DaPIanTub82} by using assumption \ref{ass:BC}. By \cite[Theorem 1.1 in section 3.1]{Pazy}, we have that for every $t\in [0,T]$, the operator $B+L(t)$ is the infinitesimal generator of a strongly continuous group of bounded linear operators acting on the space $V$ that we denote by $G_{B+L(t)}$ and that satisfies \[\|G_{B+L(t)}(x)\|_{\mathscr{L}(V)}\leq M\mathrm{exp}\left\{(m+M\|L(t)\|_{\mathscr{L}(V)})|x|\right\}\] for every $x\in \R$ where $M\geq 1$ and $m\geq 0$ are constants such that the inequality \[\|G_B(x)\|_{\mathscr{L}(V)}\leq M\mathrm{exp}\{m|x|\}\] holds for every $x\in\R$. This follows by the fact that the operator $B$ generates a strongly continuous group $G_B$ by assumption \ref{ass:B_1}. It follows from \eqref{eq:factor_resolvent_C} that the space $\Dom C$ is closed under the action of the group $G_B$ and that for every $x\in\R$ and $t\in [0,T]$, the equality
	\begin{equation}
	\label{eq:representation_of_G(B+L)}
		C(t)G_B(x)C^{-1}(t) = G_{B+L(t)}(x)
	\end{equation}
is satisfied. 
\medskip\\
Before we continue, let us fix the following notation that will simplify the exposition. Set \[\Omega_{0,T}:=\{x\in\R\,|\, x=|\omega_u-\omega_v|,\, u,v\in [0,T]\}\] and note that this set is compact since the path $\omega$ is continuous. In particular, the number $\sup\, \Omega_{0,T}$ is finite. Moreover, for fixed $u,v\in [0,T]$, we have
	\begin{equation*}
		\|G_B(\omega_u-\omega_v)\|_{\mathscr{L}(V)} \leq M\mathrm{exp}\,\{m|\omega_u-\omega_v|\} \leq M\mathrm{exp}\,\{m\sup\Omega_{0,T}\} =: k_\omega.
	\end{equation*}
	
\textit{Step 2:} In the second step, it is shown that the family $(\tilde{C}_s(t), t\in [0,T])$ is parabolic. Indeed, it is clear by assumptions \ref{ass:C_1} and \ref{ass:C_2} that the family  $(\tilde{C}_s(t), t\in [0,T])$ satisfies conditions \ref{ass:P1} and \ref{ass:P2}. We will show that the condition \ref{ass:P3} is satisfied as well. To this end, let $u,v\in [0,T]$ be arbitrary. We have that the inequality
	\begin{align*}
		\|\tilde{C}_s(u)\tilde{C}_s^{-1}(v)-\mathrm{Id}_V\|_{\mathscr{L}(V)}\leq k_{\omega}^2 \|C(u)G_B(\omega_u-\omega_v)C^{-1}(v) - G_B(\omega_u-\omega_v)\|_{\mathscr{L}(V)}
	\end{align*}
is satisfied and adding and subtracting the term $C(v)G_B(\omega_u-\omega_v)C^{-1}(v)$ inside the norm yields the inequality
	\begin{equation*}
		\|\tilde{C}_s(u)\tilde{C}_s^{-1}(v)-\mathrm{Id}_V\|_{\mathscr{L}(V)} \leq k_{\omega}^2\left[ \mathrm{(I)} + \mathrm{(II)}\right]
	\end{equation*}
where the expressions $\mathrm{(I)} $ and $\mathrm{(II)}$ are defined by 
	\begin{align*}
		& \mathrm{(I)}  :=  \|[C(u)-C(v)]G_B(\omega_u-\omega_v)C^{-1}(v)\|_{\mathscr{L}(V)}, \\
		& \mathrm{(II)} :=\|C(v)G_B(\omega_u-\omega_v)C^{-1}(v) - G_B(\omega_u-\omega_v)\|_{\mathscr{L}(V)}.
	\end{align*}
For the term $\mathrm{(I)}$ we have by using the definition \eqref{eq:C} of the family $(C(t), t\in[0,T])$ that
	\begin{equation}
	\label{eq:holder_I}
		\mathrm{(I)}\hspace{-0.5mm} = \hspace{-0.5mm}\frac{1}{p!} \left|\dot{[\omega]}_p^\pi(u)-\dot{[\omega]}_p^\pi(v)\right| \hspace{-0.5mm}\left\|B^p G_B(\omega_u-\omega_v)C^{-1}(v)\right\|_{\mathscr{L}(V)} \leq \frac{k_2 k_\omega}{p!} \sup_{r\in [0,T]}\|B^pC^{-1}(r)\|_{\mathscr{L}(V)}\, |u-v|^{\gamma_2}
	\end{equation}
where we have used the fact that $B^p$ and $G_B$ commute on $\Dom B^p$, H\"older continuity of the derivative $\dot{[\omega]}^\pi_p$ from assumption \ref{ass:omega_2} and also the fact that $B^pC^{-1}(v)$ is a bounded linear operator on $V$. This last claim follows by the closed graph theorem since we have that $\Dom B^p\supset \Dom C$ which implies that the operator $B^pC^{-1}(v)$, that is closed by assumption \ref{ass:B_2}, is defined on the whole space $V$. In order to treat the term $\mathrm{(II)}$ note first that by \cite[equality (1.2) in chapter 3]{Pazy} there is the equality
	\begin{equation*}
		[G_{B+L(t)}(x) - G_B(x)]z = \int_0^xG_{B}(x-\xi)L(t)G_{B+L(t)}(\xi)z\d{\xi}
	\end{equation*}
which holds for every $x>0$ and $z\in\Dom B$. Consequently, there is the estimate
	\begin{equation}
	\label{eq:est_difference_GB}
		\left\|G_{B+L(t)}(x) - G_B(x)\right\|_{\mathscr{L}(V)} \leq \|L(t)\|_{\mathscr{L}(V)} \sup_{|\xi|\leq |x|}\|G_{B}(\xi)\|_{\mathscr{L}(V)} \sup_{|\xi|\leq |x|}\left\|G_{B+L(t)}(\xi)\right\|_{\mathscr{L}(V)} |x|
	\end{equation}
for $x\in\R$. Denote $k_L:=\sup_{r\in [0,T]}\|L(r)\|_{\mathscr{L}(V)}$. By using equality \eqref{eq:representation_of_G(B+L)}, estimate \eqref{eq:est_difference_GB}, and H\"older continuity of the path $\omega$ from assumption \ref{ass:omega_1} successively, we obtain the following:
	\begin{align*}
		\mathrm{(II)} \leq k_\omega k_Lk_1 M\mathrm{exp}\left\{(m+Mk_L)\sup{\Omega_{0,T}}\right\}\,|u-v|^{\gamma_1}.\numberthis\label{eq:Holder_II}
	\end{align*}
Altogether, we obtain from inequalities \eqref{eq:holder_I} and \eqref{eq:Holder_II} that there are constants $\kappa_1, \kappa_2 >0$ such that the inequality
	\begin{equation*}
		\mathrm{(I)} + \mathrm{(II)} \leq \kappa_1|u-v|^{\gamma_1} + \kappa_2|u-v|^{\gamma_2}
	\end{equation*}
holds. This proves that the system $(\tilde{C}_s(t), t\in [0,T])$ satisfies \ref{ass:P3}. 
\medskip\\
Before continuing with the next step note that $\tilde{C}_s(s)=C(s)$ so that this operator is an infinitesimal generator of an analytic semigroup $S_{\tilde{C}_s(s)}=S_{C(s)}$ of bounded linear operators acting on the space $V$. Moreover, for $\alpha\geq 0$ we have $(-\tilde{C}_s(s))^\alpha=(-C(s))^\alpha$ with the domain $\Dom (-\tilde{C}_s(s))^\alpha = D_{\alpha}(s)$. In particular, we have that $\Dom (-\tilde{C}_s(s)) = D_1(s) = \Dom C$. Endow the space $D_1(s)$ with the graph norm of the operator $-C(s)$. We denote this norm by $\|\cdot\|_{D_1(s)}$. 
\medskip\\
Fix $x_0\in V$ and assume that there exists $\varepsilon>0$ is such that $x_0\in D_{1+\varepsilon}(s)$. These will be fixed for the rest of the proof. Define a function $f_{s,x_0}: [s,T]\rightarrow V$ by
	\begin{equation*}
		f_{s,x_0}(t):= \left[\tilde{C}_s(t)-\tilde{C}_s(s)\right]S_{\tilde{C}_s(s)}(t-s)x_0.
	\end{equation*}

\textit{Step 3:} In this step it will show that $f_{s,x_0}$ belongs to the space $\mathscr{C}^{\gamma}([s,T];V)$ where $\gamma=:\min\{\varepsilon, \gamma_1,\gamma_2\}$. To this end, let $u,v\in [s,T]$ be such that $u>v$. We have the inequality 
	\begin{align*}
		\left\|f_{s,x_0}(u)-f_{s,x_0}(v)\right\|_V &\leq \|[\tilde{C}_s(u)-\tilde{C}_s(v)]S_{\tilde{C}_s(s)}(u-s)x_0\|_V \\
		& \hspace{1cm} + \|[\tilde{C}_s(v)-\tilde{C}_s(s)][S_{\tilde{C}_s(s)}(u-s)-S_{\tilde{C}_s(r)}(v-s)]x_0\|_V.
	\end{align*}
Denote the first and the second term on the left-hand side of the above inequality by $I_1$ and $I_2$, respectively. For $I_1$, we have that there exit constants $\tilde{\kappa}_1, \tilde{\kappa}_2>0$ such that 
	\begin{equation}
	\label{eq:Holder_I1}
		I_1 \leq \|\tilde{C}_s(u)-\tilde{C}_s(v)\|_{\mathscr{L}(D_1(s);V)} \|S_{\tilde{C}_s(r)}(u-r)x_0\|_{D_1(s)} \leq \tilde{\kappa}_1(u-v)^{\gamma_1} + \tilde{\kappa}_2(u-v)^{\gamma_2}
	\end{equation}
by H\"older continuity of the family $(\tilde{C}_s(t),t\in[0,T])$ proved in \textit{Step 2} and by continuity of the function $\tau\mapsto S_{\tilde{C}_s(r)}(\tau-r)x_0$ on the interval $[r,T]$ in the norm $\|\cdot\|_{D_1(s)}$. Similarly, we have the following estimate for $I_2$:
	\begin{align*}
		I_2 \leq \|\tilde{C}_s(v)-\tilde{C}_s(s)\|_{\mathscr{L}(D_1(s);V)}\|[S_{\tilde{C}_s(s)}(u-s)-S_{\tilde{C}_s(s)}(v-s)]x_0\|_{D_1(s)}.
	\end{align*}
The term $\|\tilde{C}_s(v)-\tilde{C}_s(s)\|_{\mathscr{L}(D_1(s);V)}$ is bounded by $\kappa_1T^{\gamma_1}+\kappa_2T^{\gamma_2}$ by H\"older continuity of the family $(\tilde{C}_s(t), t\in [0,T])$ and for the second term, we obtain
	\begin{align*}
		\|[S_{\tilde{C}_s(s)}(u-s) - S_{\tilde{C}_s(s)}(v-s)]x_0\|_{D_1(s)} & = \\
				& \hspace{-5cm} = \|[S_{\tilde{C}_s(s)}(u-s) - S_{\tilde{C}_s(s)}(v-s)]x_0\|_V + \|(-\tilde{C}_s(s))[S_{\tilde{C}_s(s)}(u-s) - S_{\tilde{C}_s(s)}(v-s)]x_0\|_V \\
				& \hspace{-5cm} \leq (u-v)\sup_{\tau\in [0,T]}\|\tilde{C}_s(s)S_{\tilde{C}_s(s)}(\tau)x_0\|_V + C_\varepsilon (u-v)^\varepsilon \sup_{\tau\in [0,T]}\|S_{\tilde{C}_s(s)}(\tau)\|_{\mathscr{L}(V)}\|(-\tilde{C}_s(s))^{1+\varepsilon}x_0\|_V
	\end{align*}
with some constant $C_\varepsilon>0$ by using successively the definition of the graph norm $\|\cdot\|_{D_1}$, \cite[Theorem 2.4, d), in chapter 1]{Pazy}, commutativity of the semigroup $S_{\tilde{C}_s(s)}$ with its generator $\tilde{C}_s(s)$ on $D_1$ ensured by \cite[Theorem 2.4, b), in chapter 1]{Pazy} (together with the fact that $D_{1+\varepsilon}\subseteq D_1$ that holds by \cite[Theorem 6.8, b), in chapter 2]{Pazy}), \cite[Theorem 6.13, d), in chapter 2]{Pazy}, and commutativity of the semigroup $S_{\tilde{C}_s(s)}$ with its generator $\tilde{C}_s(s)$ on $D_{1+\varepsilon}$ ensured by \cite[Theorem 6.13, b), in chapter 2]{Pazy}. Thus it is seen that there exist constants $c_1,c_2>0$ such that 
	\begin{equation}
	\label{eq:Holder_I2}
		I_2 \leq c_1 (u-v) + c_2 (u-v)^\varepsilon.
	\end{equation}
It follows from the inequalities \eqref{eq:Holder_I1} and \eqref{eq:Holder_I2} that 
	\begin{equation*}
		\|f_{s,x_0}(u)-f_{s,x_0}(v)\|_V \leq \tilde{\kappa}_1 (u-v)^{\gamma_1} + \tilde{\kappa}_2(u-v)^{\gamma_2} + c_1 (u-v) + c_2(u-v)^\varepsilon.
	\end{equation*} 
which shows that indeed $f_{s,x_0}\in \mathscr{C}^\gamma([s,T];V)$.

\bigskip

\textit{Step 4:} We have shown that the family $(\tilde{C}_s(t), t\in [0,T])$ satisfies conditions \ref{ass:P1} - \ref{ass:P3} and that the function $f_{s,x_0}$ is such that it belongs to the space $\mathscr{C}^{\gamma}([s,T];V)$. Moreover, since it clearly satisfies $f_{s,x_0}(s)=0$, it follows by \cite[Theorem 7.1 and Remark 7.2 in chapter 5]{Pazy} that there is a (unique) function $u_{s,x_0}: [s,T]\rightarrow V$ such that $u_{s,x_0}\in \mathscr{C}^{1,\gamma'}([s,T];V)$ with $\gamma'<\gamma$; $u_{s,x_0}(t)\in \Dom C$ for every $t\in [s,T]$ (we can include the point $s$ in the interval since $0\in \Dom C$); and such that it satisfies the equation 
	\begin{equation}
	\label{eq:inhomoeneous_NCP}
		\begin{cases}
			\dot{u}(t) & = \tilde{C}_s(t)u(t) + f_{s,x_0}(t), \quad s<t\leq T,\\
			u(s) & = 0.
		\end{cases}
	\end{equation}
If we now define a function $v_{s,x_0}: [s,T]\rightarrow V$ by
	\begin{equation}
	\label{eq:smooth_solution}
		v_{s,x_0}(t):= u_{s,x_0}(t) + S_{\tilde{C}_s(s)}(t-s)x_0, 
	\end{equation}
it follows that ${v}_{s,x_0}\in\mathscr{C}^1([s,T];V)$, $v_{s,x_0}(t)\in \Dom C$ for every $t\in [s,T]$, and it is easily verified that $v_{s,x_0}$ satisfies the equation
	\begin{equation*}
		\begin{cases}
			\dot{v}_{s,x_0}(t) & = \tilde{C}_s(t)v_{s,x_0}(t), \quad s<t\leq T,\\
			v_{s,x_0}(s) & = x_0,
		\end{cases}
	\end{equation*}
so that $v_{s,x_0}$ is the $\Dom C$-valued solution to problem \eqref{eq:NCP}. The proof is concluded by appealing to \autoref{prop:linear_equation_without_commutativity}.
\end{proof}

\begin{remark}
\label{rem:structure_of_parabolic_sol}
Let us comment on the structure of the strong solution to \eqref{eq:BCP} in \autoref{prop:parabolic_noncom}. Set $\Delta(T):=\{(t,r)\in [0,T]^2\,|\, t\geq r\}$.
Since the family of operators $(\tilde{C}_s(r), r\in [0,T])$ satisfies conditions \ref{ass:P1} - \ref{ass:P3}, there exists a unique strongly continuous evolution system $U_s$ corresponding to this family by \cite[Theorem 6.1 in chapter 5]{Pazy}. That is, there exists a function $U_s:\Delta(T)\rightarrow\mathscr{L}(V)$ such that it satisfies the \textit{evolution property}
	\begin{align*}
		& U_s(r,r)  = \mathrm{Id}_V,  \quad\quad\quad\quad\quad\hspace{3mm} 0\leq r\leq T,\\
		& U_s(t,r)  = U_s(t,x)U_s(x,r),  \quad 0\leq r\leq x\leq t\leq T;
	\end{align*}
is \textit{strongly continuous}, i.e. for every $v\in V$ the function $U_s(\cdot,\cdot)v: \Delta(T)\rightarrow V$ is continuous; and such that it \textit{corresponds} to the family $(\tilde{C}_s(t), t\in [0,T])$, i.e. the following holds:
	\begin{enumerate}[label=(\roman*)]
	\itemsep0em
		\item There exists a constant $c_s>0$ such that $\|U_s(t,r)\|_{\mathscr{L}(V)}\leq c_s$ is satisfied for every $(t,r)\in\Delta(T)$.
		\item For every $0\leq r<t\leq T$, the range of $U_s(t,r)$ is the space $\Dom C$.
		\item For every $0\leq r<t\leq T$, the derivative $(\partial_1U_s)(t,r)$ is a bounded linear operator on the space $V$, it is strongly continuous on $0\leq s<t\leq T$ and satisfies 
			\begin{equation*}
				(\partial_1U_s)(t,r) = \tilde{C}_s(t)U_s(t,r), \quad r<t\leq T.
			\end{equation*}
		\item For every $v\in \Dom C$ and $t\in (0,T]$, the function $U_s(t,\cdot)v: [0,t]\rightarrow V$ is differentiable and satisfies
			\begin{equation*}
				[\partial_2U_s(t,\cdot)v](r) = - U_s(t,r)\tilde{C}_s(r)v, \quad 0\leq r\leq t.
			\end{equation*}
	\end{enumerate}
It is immediate (or by \cite[Theorem 6.8 in chapter 5]{Pazy}) that for every $x_0\in V$, the function $\tilde{v}_{s,x_0}$ defined by $\tilde{v}_{s,x_0}(t):=U_s(t,s)x_0$ is the unique function that belongs to the space $\mathscr{C}^1((s,T];V)\cap \mathscr{C}([s,T];V)$ and satisfies equation \eqref{eq:NCP}. This function also satisfies $\tilde{v}_{s,x_0}(t)\in\Dom C$ for every $t\in (s,T]$ and if we assume that $x_0\in \Dom C$ then we also have that $v_{s,x_0}(s)\in\Dom C$. The argument in \textit{Step 4} of the proof of \autoref{prop:parabolic_noncom} was needed in order to find a function $v_{s,x_0}$ that satisfies equation \eqref{eq:NCP} and that belongs to the space $\mathscr{C}^1([s,T];V)$. Such function exists under the stronger assumption that $x_0\in D_{1+\varepsilon}(s)$ for some $\varepsilon>0$ and it is given by formula \eqref{eq:smooth_solution}. However, by uniqueness of the solution to the problem \eqref{eq:NCP}, we have that $\tilde{v}_{s,x_0}\equiv v_{s,x_0}$. This means that the strong solution $X_{s,x_0}^\omega$ to the problem \eqref{eq:BCP} is in fact given by 
	\begin{equation*}
		X_{s,x_0}^\omega(t)= G_B(\omega_t-\omega_s)U_s(t,s)x_0, \quad t\in [s,T].
	\end{equation*}
\end{remark}

\begin{example}
\label{ex:parabolic}
Consider the formal equation
	\begin{equation*}
		(\partial_t u)(t,x) = (\partial_x^2u)(t,x) + g(x)(\partial_xu)(t,x)\dot{\omega}_t
	\end{equation*}
for $(t,x)\in [0,1]\times\mathbb{R}$ with the initial condition $u(0,x)=u_0(x)$ for $x\in\R$ where $g\in\mathscr{C}^3_b(\R)$ is a given function. Assume further that $g$ is such that $0<c_1\leq g(x)\leq c_2<\infty$ holds for every $x\in\R$ and some constants $c_1,c_2$. It is moreover assumed that $\omega$ is a function that belongs to the space $\mathscr{C}^{\gamma_1}([0,1])$ and that has finite $4$-th variation along some sequence of partitions $\pi\in\mathcal{P}[0,1]$ whose mesh size tends to zero. It is also assumed that $[\omega]_4^\pi\in\mathscr{C}^{1,\gamma_2}([0,1])$ for some $\gamma_2\in(0,1)$ so that the function $\omega$ satisfies conditions \ref{ass:omega_1} and \ref{ass:omega_2}. Additionally, we assume that the derivative satisfies $\dot{[\omega]}_4^\pi(t)>0$ for every $t\in [0,1]$ (recall that such functions $\omega$ naturally arise as paths of Wiener integrals with respect to a singular fractional Brownian motion, cf. \autoref{ex:stoch_int_fBm} and \autoref{ex:integrands_have_variation}).\medskip\\
The formal equation can be given rigorous meaning in our framework as follows. Set $V:=L^2(\R)$, $A:=\partial^2$ on $\Dom A:=W^{2,2}(\R)$, $B:=g\partial$ on $\Dom B:=W^{1,2}(\R)$ (where $[(g\partial)f](x)=g(x)f'(x)$ for $f\in W^{1,2}(\R)$). The operator $g\partial$ generates a strongly continuous group $G_{g\partial}$ of bounded linear operators on the space $L^2(\R)$ by \autoref{lem:A1} and \autoref{lem:A2} so that \ref{ass:B_1} is satisfied. Moreover, we have that the operator $B^4=(g\partial)^4$ has the domain $\Dom B^4=W^{4,2}(\R)$ and is given by
	\begin{equation*}
		(g\partial)^4f = q_1 f'+ q_2f'' + q_3f''' + q_4f^{(4)}
	\end{equation*}
for $f\in W^{4,2}(\R)$ where 
	\begin{align*}
		q_1(x) & := g(x)[g'(x)]^3 + [g(x)]^3g'''(x) + 4[g(x)]^2g'(x)g''(x)\\
		q_2(x) & := 7[g(x)]^2[g'(x)]^2 + 4[g(x)]^3 g''(x)\\
		q_3(x) & := 6[g(x)]^3g'(x)\\
		q_4(x) & := [g(x)]^4
	\end{align*}
for $x\in\R$. This operator is closed so that \ref{ass:B_2} is satisfied. We have that $\Dom C=W^{4,2}(\R)$ and
	\begin{equation*}
		C(t) = -\frac{1}{4!}\dot{[\omega]}_4^\pi(t) q_1\partial + \left(1-\frac{1}{4!}\dot{[\omega]}_4^\pi(t)q_2\right)\partial^2 -\frac{1}{4!}\dot{[\omega]}_4^\pi(t)q_3\partial^3 -\frac{1}{4!}\dot{[\omega]}_4^\pi(t)q_4\partial^4
	\end{equation*}
for every $t\in [0,1]$. Since $W^{4,2}(\R)$ is dense in $L^2(\R)$ and $C(t)$ is closed for every $t\in [0,1]$, we have that condition \ref{ass:C_1} is satisfied. Since we have that $\dot{[\omega]}_4^\pi>0$ and $0<c_1\leq g\leq c_2<\infty$, we obtain that $-C(t)$ is a strongly elliptic operator and, consequently, $C(t)$ generates an analytic semigroup on the space $L^2(\R)$. Hence, by \cite[Theorem 5.2 in chapter 2]{Pazy}, it follows that there exists $\delta(t)\in (0,\sfrac{\pi}{2})$ and $\lambda_0(t)\in\mathbb{R}$ such that the resolvent $\rho(C(t))$ contains the sector $\Sigma(t)\cup\{\lambda_0(t)\}$ where
	\begin{equation*}
		\Sigma(t):=\left\{\lambda\in\mathbb{C}\,\left|\, |\arg (\lambda-\lambda_0(t))| <\frac{\pi}{2}+\delta(t)\right.\right\}
	\end{equation*}
and, moreover, there exists a constant $m(t)>0$ such that the inequality 
	\begin{equation*}
		\|R(\lambda:C(t))\|_{\mathscr{L}(V)}\leq \frac{m(t)}{|\lambda-\lambda_0(t)|}, \quad \lambda\in \Sigma(t),
	\end{equation*}
is satisfied. Hence, by \cite[Remark 3.3.2]{Tan79}, we have that the half-plane $\mathbb{C}_{\lambda_0(t)}^+$ is contained in $\rho(C(t))$ and the inequality
	\begin{equation*}
		\|R(\lambda:C(t))\|_{\mathscr{L}(V)} \leq \frac{\tilde{m}(t)}{1+|\lambda-\lambda_0(t)|}
	\end{equation*}
is satisfied for every $\lambda\in\mathbb{C}_{\lambda_0(t)}^+$. Hence, assumption \ref{ass:C_2} is satisfied with \[k_3:=\max_{t\in [0,T]}\tilde{m}(t)\quad\mbox{and}\quad\lambda_{0}:=\max_{t\in [0,T]}\lambda_0(t).\] It remains to show that assumption \ref{ass:BC} is satisfied as well. Consider the operator $T_g:=g''\partial + 2g'\partial^2$ on $\Dom T_g:=W^{2,2}(\R)$ (i.e. $[T_gf](x)=g''(x)f'(x) + 2g'(x)f''(x)$ for $f\in W^{2,2}(\R)$). It is straightforward to check that the equality
	\begin{equation*}
		B(\lambda -C(t))v = (\lambda-C(t))Bv + T_qv
	\end{equation*}
holds for every $t\in [0,1]$, $\lambda\in\rho(C(t))$ and $v\in W^{5,2}(\R)$. Let $\vartheta\in\R $ be such that $\vartheta>\lambda_0$. Then we have that for every $t\in [0,1]$ and every $\tilde{v}\in W^{1,2}(\R)$, $(\vartheta-C(t))^{-1}\tilde{v}$ belongs to the space $W^{5,2}(\R)$ and the equality
	\begin{equation*}
		B\tilde{v} = (\vartheta - C(t))B(\vartheta-C(t))^{-1}\tilde{v} + T_g(\vartheta-C(t))^{-1}\tilde{v}
	\end{equation*}
holds. Consequently, assumption \ref{ass:BC} is satisfied with $\theta:=\vartheta$, $K:=W^{1,2}(\R)$ and the family of operators $(L(t), t\in [0,1])$ defined by $L(t):= -T_g(\vartheta-C(t))^{-1}$ for $t\in [0,1]$. 
Thus the conditions of \autoref{prop:parabolic_noncom} are verified and therefore, if $x_0\in D_{1+\varepsilon}(0)$ for some $\varepsilon>0$, then there is a strong solution to the problem 
	\begin{equation*}
		\begin{cases}
			\d{X}_t & = \partial^2X_t + g\partial X_t\d^\pi\omega_t, \quad 0<t\leq 1,\\
			\phantom{\d}X_0 & = x_0.
		\end{cases}
	\end{equation*}
\end{example}

\subsubsection{The hyperbolic case}

In what follows, we give some sufficient conditions for the solvability of the system \eqref{eq:NCP} in what is usually called the hyperbolic case. For the purposes of the following lines, assume that there exists a Hilbert space $Y$ that is continuously and densely embedded in the Hilbert space $V$. We say that a family $(K(t), t\in [0,T])$ of infinitesimal generators of strongly continuous semigroups is \textit{hyperbolic} if the following three conditions are satisfied:

\begin{enumerate}[label=(H\arabic*)]
	\itemsep0em
	\item\label{ass:H1} The family $(K(t), t\in [0,T])$ is $(M_K,\omega_K)$-stable in $V$, i.e. there exists constants $M_K\geq 1$ and $\omega_k\in\R$ such that $(\omega_k,\infty)\subset\rho(K(t))$ holds for every $t\in [0,T]$ and the inequality
		\begin{equation*}
			\left\|\prod_{i=1}^k R(\lambda:K(t_i)) \right\|_{\mathcal{L}(V)} \leq \frac{M_K}{(\lambda - \omega_k)^k}
		\end{equation*}
	is satisfied for every $\lambda >\omega_K$ and every finite sequence $0\leq t_1\leq t_2\leq \ldots \leq t_k\leq T$, $k=1,2, \ldots $. 
	\item\label{ass:H2} There exists a family $(Q(t), t\in [0,T])$ of isomorphic mappings from $Y$ onto $V$ such that for every $v\in Y$,  the function $Q(\cdot)v$ is continuously differentiable in the space $V$ on the interval $[0,T]$ and there exists a strongly continuous function $k:[0,T]\rightarrow\mathscr{L}(V)$ such that 
		\begin{equation*}
			Q(t)K(t)Q(t)^{-1}v = K(t)v + k(t)v
		\end{equation*}
	holds for every $v\in \Dom (K(t))$ and $t\in [0,T]$.
	\item\label{ass:H3} For every $t\in [0,T]$, it holds that $Y\subseteq\Dom K(t)$. Moreover, the map $K: [0,T]\rightarrow \mathscr{L}(Y;V)$ is norm-continuous. 
\end{enumerate}

\begin{proposition}
\label{prop:hyperbolic_noncom}
Assume that $\omega$ satisfies the condition
	\begin{enumerate}[label=(h\arabic*)]
		\itemsep0em
		\item\label{ass:h0} The $p$-th variation $[\omega]^\pi_p$ of the path $\omega$ is continuously differentiable on the interval $[0,T]$ with the derivative being denoted by $\dot{[\omega]}^\pi_p$.
	\end{enumerate}
Assume that the operator $B$ satisfies the condition
	\begin{enumerate}[label=(h\arabic*)]
	\setcounter{enumi}{1}
		\itemsep0em
		\item\label{ass:h1} The operator $B$ is an infinitesimal generator of a strongly continuous group of bounded linear operators $G_B$ acting on the space $V$.
		\item\label{ass:h11} For every $x\in\mathbb{R}$ it holds that $\|G_B(x)\|_{\mathscr{L}(V)}\leq 1$.
	\end{enumerate}
Assume further that there exists a Hilbert space $Y$ that is continuously and densely embedded in $V$ such that the operator $B$ satisfies the condition
	\begin{enumerate}[label=(h\arabic*)]
		\itemsep0em
		\setcounter{enumi}{3}
		\item\label{ass:h3} The space $Y$ is closed under the action of the group $G_B$, i.e. $G_BY\subseteq Y$.
		\item\label{ass:h2} For every $x\in\mathbb{R}$ it holds that $\|G_B(x)\|_{\mathscr{L}(Y)}\leq 1$.
	\end{enumerate}
Moreover, it is assumed that the family $(C(t), t\in [0,T])$ defined by \eqref{eq:C} satisfies the conditions
	\begin{enumerate}[label=(h\arabic*)]
		\itemsep0em
		\setcounter{enumi}{5}
		\item\label{ass:h4} $Y\subseteq \Dom C$ and the operator $C(t)$ is closed for every $t\in [0,T]$.
		\item\label{ass:h5} There exists a constant $\lambda_1$ such that the resolvent set $\rho(C(t))$ contains the ray $\mathbb{R}_{\lambda_1}^+$ for every $t\in [0,T]$ and such that the inequality
			\begin{equation*}
				\|R(\lambda: C(t))\|_{\mathscr{L}(V)} \leq \frac{1}{\lambda-\lambda_1}
			\end{equation*}
		is satisfied for every $\lambda>\lambda_1$ and every $t\in [0,T]$. 
		\item\label{ass:h6} There exists an isomorphism $Q\in\mathscr{L}(Y;V)$ such that 
			\begin{enumerate}
				\item for every $t\in [0,T]$, it holds that $C(t)Q^{-1}(\Dom C)\subseteq Y$;
				\item for every $x\in\R$ and every $y\in Y$, it holds that
					\begin{equation*}
						QG_B(x)y = G_B(x)Qy;
					\end{equation*}
			\end{enumerate}
		and there exists a strongly continuous function $L:[0,T]\rightarrow\mathscr{L}(V)$ such that the equality
			\begin{equation*}
				QC(t)Q^{-1}v = C(t)v + L(t)v
			\end{equation*}
		is satisfied for every $t\in [0,T]$ and every $v\in \Dom C$. 
	\end{enumerate}
Then for every $s\in [0,T)$ and $x_0\in Y$, there exists a strong solution to the bilinear problem \eqref{eq:BCP} that takes values in the space $Y$.
\end{proposition}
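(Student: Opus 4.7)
The plan is to construct a $Y$-valued solution of the homogeneous initial-value problem \eqref{eq:NCP} and then appeal to \autoref{prop:linear_equation_without_commutativity} with $\tilde{V}:=Y$. The required hypotheses are supplied by the assumptions of the proposition: \autoref{ass:1} is exactly \ref{ass:h0} together with \ref{ass:h1}; $G_B\tilde{V}\subseteq\tilde{V}$ is \ref{ass:h3}; and $\tilde{V}\subseteq\Dom C$ is part of \ref{ass:h4}. Once such $v_{s,x_0}$ is produced, the process $X^\omega_{s,x_0}(t):=G_B(\omega_t-\omega_s)v_{s,x_0}(t)$ will be a strong solution of \eqref{eq:BCP} that takes values in $Y$ thanks to \ref{ass:h3}. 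To produce $v_{s,x_0}$ I would show that the family $\bigl(\tilde{C}_s(t)\bigr)_{t\in [0,T]}$ is hyperbolic in the sense of \ref{ass:H1}--\ref{ass:H3} with auxiliary space $Y$ and then invoke Kato's theorem on hyperbolic evolution systems, for instance \cite[Theorem 3.1 in chapter 5]{Pazy}; this will furnish a strongly continuous evolution system $U_s$ with $U_s(t,s)Y\subseteq Y$, so that $v_{s,x_0}(t):=U_s(t,s)x_0$ gives the desired $Y$-valued solution of \eqref{eq:NCP} for $x_0\in Y$.

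The stability condition \ref{ass:H1} follows from the telescoping identity
\begin{equation*}
\prod_{i=1}^{k}R(\lambda:\tilde{C}_s(t_i))=G_B(\omega_s-\omega_{t_1})\,R(\lambda:C(t_1))\,G_B(\omega_{t_1}-\omega_{t_2})\,R(\lambda:C(t_2))\cdots R(\lambda:C(t_k))\,G_B(\omega_{t_k}-\omega_s),
\end{equation*}
obtained by collapsing consecutive $G_B$-factors via the group law; each $G_B$-term is a contraction on $V$ by \ref{ass:h11} and each resolvent is bounded by $(\lambda-\lambda_1)^{-1}$ by \ref{ass:h5}, which delivers the stability constants $M_K=1$ and $\omega_K=\lambda_1$. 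For the intertwining property \ref{ass:H2}, I would take the constant isomorphism $Q(t)\equiv Q$ from \ref{ass:h6}; the commutation $QG_B(x)y=G_B(x)Qy$ provided by the second item of \ref{ass:h6}, the first item of \ref{ass:h6} (which ensures that the intermediate vector stays in $Y$ so that $Q$ may actually be applied), and the relation $QC(t)Q^{-1}=C(t)+L(t)$ combine to give, for every $v\in\Dom\tilde{C}_s(t)$,
\begin{equation*}
Q\tilde{C}_s(t)Q^{-1}v=G_B(\omega_s-\omega_t)\bigl[QC(t)Q^{-1}\bigr]G_B(\omega_t-\omega_s)v=\tilde{C}_s(t)v+k(t)v,
\end{equation*}
where $k(t):=G_B(\omega_s-\omega_t)L(t)G_B(\omega_t-\omega_s)\in\mathscr{L}(V)$ is strongly continuous in $t$.

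The main obstacle will be condition \ref{ass:H3}, namely $Y\subseteq\Dom\tilde{C}_s(t)$ together with norm continuity of $t\mapsto\tilde{C}_s(t)$ in $\mathscr{L}(Y;V)$. The inclusion is immediate from \ref{ass:h3} and \ref{ass:h4}. Since $B$ generates a $C_0$-group it is closed, and so is its power $B^p$; combined with the chain $Y\subseteq\Dom C\subseteq\Dom B^p\subseteq\Dom B$ and closedness of $C(t)$ from \ref{ass:h4}, the closed graph theorem yields $B,\,B^p,\,C(t)\in\mathscr{L}(Y;V)$, and $t\mapsto C(t)$ is then norm continuous in $\mathscr{L}(Y;V)$ by continuity of $\dot{[\omega]}_p^\pi$ from \ref{ass:h0}. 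The standard semigroup identity $G_B(x)y-y=\int_0^{x}G_B(\xi)By\d\xi$ combined with \ref{ass:h11} gives the quantitative bound
\begin{equation*}
\|G_B(x)-\mathrm{Id}_V\|_{\mathscr{L}(Y;V)}\leq|x|\,\|B\|_{\mathscr{L}(Y;V)},
\end{equation*}
so $t\mapsto G_B(\omega_t-\omega_s)$ is norm continuous into $\mathscr{L}(Y;V)$ by continuity of $\omega$. Setting $g(t):=G_B(\omega_t-\omega_s)$ and exploiting the factorisation $g(t)=G_B(\omega_t-\omega_r)g(r)$ together with the contractivity $\|g(r)\|_{\mathscr{L}(Y)}\leq 1$ from \ref{ass:h2}, one can decompose $\tilde{C}_s(t)-\tilde{C}_s(r)$ into three pieces each controlled by one of the continuity properties just established, completing the verification of \ref{ass:H3}. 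Kato's theorem then delivers the evolution system, and \autoref{prop:linear_equation_without_commutativity} finishes the proof.
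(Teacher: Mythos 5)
Your proposal follows essentially the same route as the paper: conjugate $C(t)$ by $z_{s,\omega}(t)=G_B(\omega_t-\omega_s)$, verify the hyperbolic conditions \ref{ass:H1}--\ref{ass:H3} for the family $(\tilde{C}_s(t), t\in[0,T])$ with auxiliary space $Y$ and the constant isomorphism $Q$, invoke a Kato--Tanabe hyperbolic evolution theorem to produce the $\mathscr{C}([s,T];Y)\cap\mathscr{C}^1([s,T];V)$ solution of \eqref{eq:NCP}, and conclude with \autoref{prop:linear_equation_without_commutativity}. Your treatment of \ref{ass:H1} via the telescoped resolvent product is equivalent to the paper's: the paper instead applies the Hille--Yosida theorem to each $\tilde{C}_s(t)$ (using $R(\lambda:\tilde{C}_s(t))=z_{s,\omega}^{-1}(t)R(\lambda:C(t))z_{s,\omega}(t)$ together with \ref{ass:h11} and \ref{ass:h5}) and then quotes the standard fact that a family of quasi-contractive generators with common constant $M=1$ is automatically $(1,\lambda_1)$-stable, which is your telescoping identity in disguise. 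Your verification of \ref{ass:H2} is identical to the paper's, with $\tilde L(t)=G_B^{-1}(\omega_t-\omega_s)L(t)G_B(\omega_t-\omega_s)$.

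The one place where you overreach is \ref{ass:H3}. The paper's Step~3 establishes only the uniform bound $\sup_{t}\|\tilde{C}_s(t)\|_{\mathscr{L}(Y;V)}<\infty$ (closed graph theorem plus \ref{ass:h2}) before citing \cite[Theorems 4.5.1 and 4.5.2]{Tan79}; you additionally claim norm continuity of $t\mapsto\tilde{C}_s(t)$ in $\mathscr{L}(Y;V)$, which is what \ref{ass:H3} literally demands, but your three-piece decomposition does not close. After factoring out $g(r)$ you are left with terms of the form $[G_B(-h)-\mathrm{Id}_V]\,C(t)G_B(h)y$ and $C(t)[G_B(h)-\mathrm{Id}_V]y$ with $h=\omega_t-\omega_r$; the identity $G_B(h)z-z=\int_0^hG_B(\xi)Bz\d\xi$ controls the first only if the vector $z=C(t)G_B(h)y$ is controlled in $\Dom B$ (it is merely in $V$), and controls the second only if $\|[G_B(h)-\mathrm{Id}_V]y\|_Y$ is small uniformly on the unit ball of $Y$, i.e.\ only if $G_B$ is \emph{norm}-continuous on $Y$ --- neither of which follows from the hypotheses, since strong continuity of a group does not upgrade to operator-norm continuity unless its generator is bounded. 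So either state and prove only the uniform bound, as the paper does (and make sure the version of the evolution theorem you quote is the one that accepts it), or acknowledge that norm continuity requires an argument you have not supplied; as written, the sentence ``one can decompose $\tilde{C}_s(t)-\tilde{C}_s(r)$ into three pieces each controlled by one of the continuity properties just established'' is not substantiated.
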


\begin{proof}
Let $s\in [0,T)$ be fixed and define the family of linear operators
	\begin{equation*}
		\tilde{C}_s(t):= z_{s,\omega}^{-1}(t)C(t)z_{s,\omega}(t), \quad t\in [0,T],
	\end{equation*}
on the domain $\Dom\tilde{C}_s(t):=z_{s,\omega}^{-1}(t)\left(\Dom C\right)$. 
\medskip

\textit{Step 1:} It is shown first that the family $(\tilde{C}_s(t), t\in [0,T])$ is stable in $V$, i.e. that this family satisfies condition \ref{ass:H1}. Indeed, it follows that for every $t\in [0,T]$, the space $\Dom \tilde{C}_s(t)$ is dense in $V$ since $Y$ is dense in $V$ and $Y\subseteq \Dom C\subseteq \Dom \tilde{C}_s(t)$ by assumption \ref{ass:h4}. Moreover, the operator $\tilde{C}_s(t)$ is closed because $C(t)$ is closed by the same assumption. Furthermore, as in the proof of \autoref{prop:parabolic_noncom}, there is the inclusion $\rho(C(t))\subseteq \rho(\tilde{C}_s(t))$ and the equality
	\begin{equation}
	\label{eq:resolvent_of_C_s(t)_hyperbolic}
		R(\lambda:\tilde{C}_s(t)) = z_{s,\omega}^{-1}(t)R(\lambda: C(t))z_{s,\omega}(t)
	\end{equation}
is satisfied for every $\lambda \in\rho(C(t))$. Hence, it follows by assumptions \ref{ass:h11} and \ref{ass:h5} that $\mathbb{R}_{\lambda_1}^+\subset\rho(\tilde{C}_s(t))$ and, moreover, there is the estimate
	\begin{equation*}
		\|R(\lambda:\tilde{C}_s(t))\|_{\mathscr{L}(V)} \leq \|z_{s,\omega}(t)\|_{\mathscr{L}(V)}\|R(\lambda:C(t))\|_{\mathscr{L}(V)}\|z_{s,\omega}(t)\|_{\mathscr{L}(V)} \leq \frac{1}{\lambda-\lambda_1}
	\end{equation*}
for every $\lambda>\lambda_1$. Therefore, by the Hille-Yosida theorem \cite[Corollary 3.8 in chapter 1]{Pazy}, it follows that the operator $\tilde{C}_s(t)$ generates a strongly continuous semigroup $S_{\tilde{C}_s(t)}$ of bounded linear operators acting on $V$ such that $\|S_{\tilde{C}_s(t)}(u)\|_{\mathscr{L}(V)}\leq \mathrm{e}^{\lambda_1u}$ holds for every $u\geq 0$. Consequently, the family $(\tilde{C}_s(t), t\in [0,T])$ of generators is $(1,\lambda_1)$-stable in $V$ by the remark preceding \cite[Theorem 2.2 in chapter 5]{Pazy}.
\medskip

\textit{Step 2:} It is shown that there exists a strongly continuous map $\tilde{L}: [0,T]\rightarrow \mathscr{L}(V)$ such that 
	\begin{equation*}
		Q\tilde{C}_s(t)Q^{-1}v = \tilde{C}_s(t)v + \tilde{L}(t)v
	\end{equation*}
holds for every $t\in [0,T]$ and every $v\in \Dom\tilde{C}_s(t)$, i.e. that the family $(\tilde{C}_s(t), t\in [0,T])$ satisfies condition \ref{ass:H2}. To this end, let $t\in [0,T]$ and $v\in \Dom\tilde{C}_s(t)$ be arbitrary. Then, there exists a $\tilde{v}\in \Dom C$ such that $v=G_B^{-1}(\omega_t-\omega_s)\tilde{v}$ and we have that 
	\begin{align*}
		Q\tilde{C}_s(t)Q^{-1}v & = Q\tilde{C}_s(t)Q^{-1}G_B^{-1}(\omega_t-\omega_s)\tilde{v}\\
			& = Q\tilde{C}_s(t)G_B^{-1}(\omega_t-\omega_s)Q^{-1}\tilde{v}\\
			& = QG_B^{-1}(\omega_t-\omega_s)C(t)Q^{-1}\tilde{v}\\
			& = G_B^{-1}(\omega_t-\omega_s)QC(t)Q^{-1} \tilde{v}\\
			& = G_B^{-1}(\omega_t-\omega_s)[C(t) + L(t)]\tilde{v}\\
			& = G_B^{-1}(\omega_t-\omega_s)C(t)G_B(\omega_t-\omega_s)v + G_B^{-1}(\omega_t-\omega_s)L(t)G_B(\omega_t-\omega_s)v\\
			& = \tilde{C}_s(t)v + G_B^{-1}(\omega_t-\omega_s)L(t)G_B(\omega_t-\omega_s)v
	\end{align*}
by using assumption \ref{ass:h6}.
\medskip

\textit{Step 3:} Finally, it is shown that there exists a constant $K>0$ such that $\|\tilde{C}_s(t)\|_{\mathscr{L}(Y;V)}\leq K$ holds for every $t\in [0,T]$, i.e. that the family satisfies condition \ref{ass:H3}. For $v\in Y$, we have 
	\begin{align*}
		\|\tilde{C}_s(t)v\|_{V} & = \|z_{s,\omega}^{-1}(t)C(t)z_{s,\omega}(t)\|_{V}\\
			& \leq \|z_{s,\omega}^{-1}(t)\|_{\mathscr{L}(V)} \|C(t)z_{s,\omega}(t)v\|_{V}\\
			& \leq \|C(t)\|_{\mathscr{L}(Y;V)} \|z_{s,\omega}(t)v\|_{Y} \\
			& \leq \|C(t)\|_{\mathscr{L}(Y;V)}\|z_{s,\omega}(t)\|_{\mathscr{L}(Y)}\|v\|_{Y}\\
			& \leq \sup_{t\in [0,T]} \|C(t)\|_{\mathscr{L}(Y;V)} \|v\|_{Y}.
	\end{align*}
by appealing to assumption \ref{ass:h2}. The supremum is finite by the closed graph theorem since for every $t\in [0,T]$, the operator $C(t)$ is closed and defined on the whole space $Y$.
\medskip

\textit{Step 4:} From \textit{Step 1} - \textit{Step 3} it follows by \cite[Theorem 4.5.1 and Theorem 4.5.2]{Tan79} that for every $x_0$ there exists a unique function $v_{s,x_0}$ that belongs to the space $\mathscr{C}([s,T];Y)\cap\mathscr{C}^1([s,T];V)$ and that satisfies equation \eqref{eq:NCP}. Moreover, since by assumptions \ref{ass:h0} and \ref{ass:h1}, \autoref{ass:1} is satisfied, we have by \autoref{prop:linear_equation_without_commutativity} that the problem \eqref{eq:BCP} admits a strong solution.
\end{proof}

\begin{remark}
\label{rem:structure_of_hyperbolic_sol}
Let us comment on the structure of the strong solution to \eqref{eq:BCP} in \autoref{prop:hyperbolic_noncom}. By \cite[Theorem 4.4.2]{Tan79}, there exists a unique strongly continuous evolution system $U_s$ on $\Delta(T)$ with the following properties:
	\begin{enumerate}[label=(\roman*)]
	\itemsep0em
		\item For every $(t,r)\in\Delta(T)$, it holds that $\|U_s(t,r)\|_{\mathscr{L}(V)}\leq \mathrm{e}^{\lambda_1(t-r)}.$
		\item For every $0\leq r<t\leq T$, it holds that $U_s(t,r)Y\subseteq Y$ and $U_s(t,r)$ is strongly continuous in $Y$.
		\item For every $v\in Y$ and $r\in [0,T)$, the function $U_s(t,r)v$ is strongly continuously differentiable in the first variable and it holds that  
			\begin{equation*}
				[\partial_1U_s(\cdot,r)v](t) = \tilde{C}_s(t)U_s(t,r)v, \quad r\leq t\leq T.
			\end{equation*}
		\item For every $v\in Y$ and $t\in (0,T]$, the function $U_s(t,r)v$ is strongly continuously differentiable in the second variable and it holds that 
			\begin{equation*}
				[\partial_2U_s(t,\cdot)v](r) = - U_s(t,r)\tilde{C}_s(r)v, \quad 0\leq r\leq t.
			\end{equation*}
	\end{enumerate}
Consequently, by \cite[Theorem 4.5.1 and Theorem 4.5.2]{Tan79}, we have that for $x_0\in Y$, the solution to \eqref{eq:NCP} is unique and it is given by $v_{s,x_0}(t)=U_s(t,s)x_0$. Consequently, we have that the strong solution $X_{s,x_0}^\omega$ to the problem \eqref{eq:BCP} constructed in \autoref{prop:hyperbolic_noncom} is in fact given by 
	\begin{equation*}
		X_{s,x_0}^\omega(t)= G_B(\omega_t-\omega_s)U_s(t,s)x_0, \quad t\in [s,T].
	\end{equation*}
\end{remark}

\begin{example}
\label{ex:hyperbolic}
Consider the formal equation 
	\begin{equation*}
		(\partial_t u)(t,x) = \frac{1}{8}(\partial_x^4u)(t,x) + \mathrm{i}\left[(\partial_x^4u)(t,x)+ (\partial_x^2u)(t,x) - g(x)u(t,x)\right] + (\partial_xu)(t,x)\dot{W}^{\frac{1}{4}}_t
	\end{equation*}
for $(t,x)\in [0,1]\times \mathbb{R}$ with the initial condition $u(0,x) = u_0(x)$ for $\in\mathbb{R}$ where $g\in\mathscr{C}^6_b(\R)$ and where $\mathrm{i}$ is the imaginary unit. This formal equation can be given rigorous meaning in our framework as follows. Set $V:=L^2(\R;\mathbb{C})$, $A:=\frac{1}{8}\partial^4+\mathrm{i}(\partial^4 + \partial^2 - g\,\mathrm{Id})$ on $\Dom A=W^{4,2}(\R;\mathbb{C})$, $B:=\partial$ on $\Dom B=W^{1,2}(\R;\mathbb{C})$, $\omega:=W^{\frac{1}{4}}$ is the fractional Brownian motion with the Hurst parameter $H=\sfrac{1}{4}$.
\medskip

As before, if we take the sequence of partitions $\pi$ to be either the sequence of dyadic partitions $\tilde{D}$ or the sequence $\tilde{E}$, we have by \autoref{ex:integrands_have_variation} that $[W^{\frac{1}{4}}]^{\pi}_{4}(t) = 3t$ which is continuously differentiable so that assumption \ref{ass:h0} is satisfied. Moreover, as before, we have that $B=\partial$ generates a strongly continuous group $G_{\partial}$ of left-shift operators on $L^2(\R;\mathbb{C})$ so that assumption \ref{ass:h1} is also satisfied and assumption \ref{ass:h11} is easily verified.
\medskip

Set 
	\begin{equation*}
		Y:=W^{6,2}(\R;\mathbb{C}).
	\end{equation*}
Clearly, we have that this space is continuously and densely embedded in $L^2(\R;\mathbb{C})$ and closed under the action of the group $G_{\partial}$ so that assumption \ref{ass:h2} is satisfied and assumption \ref{ass:h3} is also be easily verified.
\medskip

For the family of operators $(C(t), t\in [0,1])$, we have for every $t\in [0,1]$ that $C(t)=C$ where the operator $C$ is defined on $\Dom C= W^{4,2}(\R;\mathbb{C})$ by
	\begin{equation*}
		[Cf](x):= \mathrm{i} \left[f^{(\mathrm{4})}(x) + f{''}(x) -g(x)f(x)\right], \quad f\in W^{4,2}(\R;\mathbb{C}).
	\end{equation*}
Clearly it holds that $Y=W^{6,2}(\R;\mathbb{C}) \subseteq W^{4,2}(\R;\mathbb{C})=\Dom C$. Moreover, since the operator $C_0$ defined by $C_0:=\partial^4 +\partial^2-g$ is self-adjoint on $L^2(\R)$, it follows that $C=\mathrm{i}C_0$ is skew-adjoint on $L^2(\R;\mathbb{C})$ and therefore, it generates a unitary, strongly continuous group of bounded linear operators on $L^2(\R;\mathbb{C})$ by Stone's theorem. Thus, both assumptions \ref{ass:h4} and \ref{ass:h5} are satisfied.
\medskip

Finally, it remains to verify assumption \ref{ass:h6}. To this end, set $Q:=(\vartheta - \partial^6)$ on $W^{6,2}(\R;\mathbb{C})=Y$ where $\vartheta\in\rho(\partial^6)$. Clearly, for $f\in W^{4,2}(\R;\mathbb{C})=\Dom C$, we have that $C(\vartheta-\partial^6)^{-1}f\in W^{6,2}(\R;\mathbb{C})=Y.$ Moreover, we have that 
	\begin{equation*}
		[(\vartheta-\partial^6) G_{\partial}(x)f](z)=\vartheta f(x+z) - f^{(6)}(x+z) = [G_{\partial}(x)(\vartheta - \partial^6)f](z), \quad \mathrm{a. e.}\,\, z\in\mathbb{R},
	\end{equation*}
holds for every $f\in W^{6,2}(\R;\mathbb{C})=Y$. Finally, consider the operator $T_g$ defined by
	\begin{equation*}
		[T_gv](x) = \sum_{k=0}^5{{6}\choose{k}} g^{(6-k)}(x)v^{(k)}(x)
	\end{equation*}
defined for $v\in \Dom T_g=W^{5,2}(\R;\mathbb{C})$. It is straightforward to verify that the equality
	\begin{equation*}
		(\vartheta-\partial^6)Cv= C(\vartheta-\partial^6)v + T_gv
	\end{equation*}
holds for every $v\in W^{10,2}(\R;\mathbb{C})$. Consequently, we have that for $\tilde{v}\in W^{4,2}(\R;\mathbb{C})=\Dom C$, $(\vartheta-\partial^6)^{-1}\tilde{v}\in W^{10,2}(\R;\mathbb{C})$ and the equality  
	\begin{equation*}
		(\vartheta-\partial^6)C(\vartheta-\partial^6)^{-1}\tilde{v} = C\tilde{v} + T_g(\vartheta-\partial^6)^{-1}\tilde{v}
	\end{equation*}
is satisfied. Thus, the conditions of \autoref{prop:hyperbolic_noncom} are verified and therefore, if $u_0\in W^{6,2}(\R;\mathbb{C})$, then there is a strong solution to the problem
	\begin{equation*}
		\begin{cases}
			\d{X}_t  & = \left[\frac{1}{8}\partial^4 + \mathrm{i}\left(\partial^4 + \partial^2 -g\right)\right]X_t\d{t} + \partial X_t\d^{\pi}W_t^{\frac{1}{4}}, \quad 0<t\leq 1,\\
			\phantom{\d}X_0 & = u_0.
		\end{cases}
	\end{equation*}
\end{example}

\section*{Acknowledgement}
The research of P. \v{C}. has been supported by the Czech Science Foundation, project GA\v{C}R 19-07140S. M.J. G.-A. has been supported by Ministerio de Ciencia, Innovaci\'on y Universidades, Grant No. PGC2018-096540-I00. She also would like to thank VIASM in Hanoi (Vietnam) for its great hospitality and support, that helped her to develop part of the research of this paper.

\section*{References}

\newpage
\begin{appendices}
\section{Strongly continuous group generated by $g\partial$}
Assume that $g\in\mathscr{C}(\R)$ for which there exists two constants $c_1,c_2$ such that 
	\begin{equation*}
		0<c_1\leq g(x)\leq c_2 <\infty
	\end{equation*}	
holds for every $x\in\R$. In this appendix, it is shown that the operator $g\partial: W^{1,2}(\R)\rightarrow L^2(\R)$ defined by $[(g\partial)f](x):=g(x)f'(x)$ for $x\in\R$, generates a strongly continuous group $(G_{g\partial}(t),t\in\R)$ of bounded linear operators acting on the space $L^2(\R)$ that is given by 
	\begin{equation*}
		[G_{g\partial}f](x)=f\left(h^{-1}(h(x)+t)\right), \quad x\in\R, f\in L^2(\R),
	\end{equation*}
where $h:\R\rightarrow\R$ is the function defined by 
	\begin{equation*}
		h(x):= \int_c^x\frac{1}{g(r)}\d{r}, \quad x\in\R,
	\end{equation*}
for arbitrary fixed $c\in\R$.
\medskip\\
Although this fact seems to be well-known, cf. e.g. \cite[page 113]{DaPIanTub82}, we were unable to find the proof in our (operator semigroup) setting so we decided to include it here for the sake of completeness. It should be noted that this group of operators is closely connected to the translation equation (see e.g. \cite[section 1, chapter 6]{Acz66}), one-parameter Lie groups (see e.g. \cite[section 6 of chapter 8, p. 293 - 299]{Ham89}), and the theory of generalized shift operators (see e.g. \cite{LevLit89} and the many references therein).
\medskip\\
The proof is split into two lemmas. 
\begin{lemma}
\label{lem:A1}
The family of operators $(G_{g\partial}(t),t\in\R)$ is a strongly continuous group of bounded linear operators acting on the space $L^2(\R)$. 
\end{lemma}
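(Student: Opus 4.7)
The plan is to verify the three defining properties of a strongly continuous group: (i) $G_{g\partial}(0)=\mathrm{Id}_{L^2(\R)}$ and $G_{g\partial}(t+s)=G_{g\partial}(t)G_{g\partial}(s)$ for $t,s\in\R$; (ii) each $G_{g\partial}(t)$ is a bounded linear operator on $L^2(\R)$; and (iii) $G_{g\partial}(\cdot)f$ is continuous from $\R$ into $L^2(\R)$ for every $f\in L^2(\R)$. The boundedness and continuity of $g$ together with $0<c_1\le g\le c_2<\infty$ are what make all of this work.

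I would first establish the basic regularity of the auxiliary function $h$. Since $h'(x)=1/g(x)$ is continuous and takes values in $[1/c_2,1/c_1]$, $h$ is a $\mathscr{C}^1$ bi-Lipschitz increasing homeomorphism of $\R$ onto $\R$ with $\mathscr{C}^1$ inverse satisfying $(h^{-1})'(y)=g(h^{-1}(y))$. Define the flow $\phi_t(x):=h^{-1}(h(x)+t)$ for $t,x\in\R$. The identity $h(\phi_s(x))=h(x)+s$ immediately yields $\phi_0=\mathrm{id}$ and $\phi_t\circ\phi_s=\phi_{t+s}$, from which the algebraic group property of $(G_{g\partial}(t))_{t\in\R}$ follows, given that $G_{g\partial}(t)$ is well-defined on $L^2(\R)$ as a composition operator with $\phi_t$.

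Next I would prove boundedness. By the chain rule, $\phi_t$ is a $\mathscr{C}^1$-diffeomorphism with derivative $\phi_t'(x)=g(\phi_t(x))/g(x)\in[c_1/c_2,\,c_2/c_1]$. Performing the change of variables $y=\phi_t(x)$ and using $\phi_t^{-1}=\phi_{-t}$,
\begin{equation*}
\|G_{g\partial}(t)f\|_{L^2(\R)}^2 = \int_{\R} |f(\phi_t(x))|^2\d{x}= \int_{\R} |f(y)|^2\,\frac{g(\phi_{-t}(y))}{g(y)}\d{y}\leq \frac{c_2}{c_1}\|f\|_{L^2(\R)}^2,
\end{equation*}
so $\|G_{g\partial}(t)\|_{\mathscr{L}(L^2(\R))}\le\sqrt{c_2/c_1}$ uniformly in $t\in\R$. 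This change of variables is justified because $\phi_t$ is a $\mathscr{C}^1$-diffeomorphism with a positive Jacobian bounded away from $0$ and $\infty$.

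For strong continuity, I would argue by density. For $f\in\mathscr{C}_c(\R)$, the map $(t,x)\mapsto f(\phi_t(x))$ is continuous, $\phi_t(x)\to x$ uniformly on compact subsets of $\R$ as $t\to 0$, and the supports $\phi_{-t}(\mathrm{supp}\, f)$ remain inside a fixed compact set for $|t|\le 1$ since $\phi_{-t}$ is a bi-Lipschitz homeomorphism with uniform constants. Dominated convergence then gives $\|G_{g\partial}(t)f-f\|_{L^2(\R)}\to 0$ as $t\to 0$. Combining this with the uniform bound $\sup_{t\in\R}\|G_{g\partial}(t)\|_{\mathscr{L}(L^2(\R))}\le\sqrt{c_2/c_1}$ and the density of $\mathscr{C}_c(\R)$ in $L^2(\R)$ by a standard $\varepsilon/3$-argument extends strong continuity at $t=0$ to all of $L^2(\R)$; continuity at a general $t_0\in\R$ follows from the group property $G_{g\partial}(t)-G_{g\partial}(t_0)=G_{g\partial}(t_0)(G_{g\partial}(t-t_0)-\mathrm{Id})$.

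The main obstacle I anticipate is not any single step but the need to keep the change-of-variables computation clean without imposing more regularity on $g$ than stated; continuity of $g$ suffices for $h$ to be a $\mathscr{C}^1$-diffeomorphism, but the derivative of $\phi_t$ is only continuous in $x$, so one cannot freely differentiate $G_{g\partial}(t)f$ in $x$ for $f\in L^2(\R)$. This is harmless for (i)--(iii), which involve only the composition itself, but it is the point where one has to be careful. The identification of the infinitesimal generator as $g\partial$ on $W^{1,2}(\R)$ will be treated separately in \autoref{lem:A2}.
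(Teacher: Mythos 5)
Your proof is correct and follows essentially the same route as the paper's: the same regularity analysis of $h$, the same change-of-variables bound $\|G_{g\partial}(t)\|_{\mathscr{L}(L^2(\R))}^2\leq c_2/c_1$, and the same density argument over $\mathscr{C}_c(\R)$ for strong continuity. If anything you are slightly more careful than the paper, which passes from uniform convergence on $\mathscr{C}_c(\R)$ to $L^2$-convergence without explicitly noting that the supports of $G_{g\partial}(t)f$ stay in a fixed compact set for $|t|\leq 1$.
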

\begin{proof}
We begin with some easy observations. Fix $c\in\R$. Since $g$ is continuous and bounded from below by the positive constant $c_1$, the function $h$ is well-defined and continuously differentiable with positive derivative, it is also increasing, and therefore injective. By the mean value theorem and the intermediate value theorem, $h$ is also surjective and, consequently, invertible on the whole real line and we denote its inverse by $h^{-1}$. Moreover, since $h$ is strictly monotone, the inverse $h^{-1}$ is continuous. Now, for every $t\in\R$, the operator $G_{g\partial}(t)$ is linear and bounded on $L^2(\R)$ by the following estimate:
	\begin{equation}
	\label{eq:estimate_group}
		\|G_{g\partial}(t)f\|_{L^2(\R)}^2 := \int_{\R} \left|f\left(h^{-1}(h(x)+t)\right)\right|^2\d{x} = \int_{\R} \left|f(x)\right|^2\frac{g\left(h^{-1}(h(x)-t)\right)}{g(x)}\d{x} \leq \frac{c_2}{c_1}\|f\|_{L^2(\R)}^2.
	\end{equation}
It is straightforward to verify that the system of bounded linear operators $(G_{g\partial}(t), t\in \R)$ is a group. Moreover, by estimate \eqref{eq:estimate_group}, we have that the function $G_{g\partial}: \R\rightarrow\mathscr{L}(L^2(\R))$ is uniformly bounded and hence, in order to show its strong continuity, it is enough to show that $G_{g\partial}(t)f$ tends to $f$ as $t\downarrow 0$ in the topology of $L^2(\R)$ for $f$ from a dense subset of $L^2(\R)$, cf. \cite[Exercise I.5.9 (5)]{EngNag00}. Let $f\in \mathscr{C}_c(\R)$. We have that 
	\begin{equation*}
		\lim_{t\downarrow 0} \|G_{g\partial}(t)f -f\|_{\infty} = \lim_{t\downarrow 0} \sup_{x\in\R}\left|f\left(h^{-1}(h(x)+t)\right) -f(x)\right| = 0
	\end{equation*}	
by (uniform) continuity of $f$ and continuity of $h^{-1}$ and $h$ on $\R$ which implies $G_{g(t)f}$ tends to $f$ in the topology of $L^2(\R)$.
\end{proof}
\begin{lemma}
\label{lem:A2}
The infinitesimal generator of the group $(G_{g\partial}(t), t\in\R)$ is the operator $g\partial$.
\end{lemma}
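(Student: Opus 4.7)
The plan is to identify the generator, call it $A$, of the group $(G_{g\partial}(t),t\in\R)$ on a convenient dense subspace, extend the identification by closedness of generators, and then pin down the exact domain via the Laplace-transform formula for the resolvent. First, for $f\in\mathscr{C}_c^1(\R)$, the identity $(h^{-1})'(y)=g(h^{-1}(y))$ combined with the chain rule yields, for each $x\in\R$, the pointwise equality
\[
	\frac{\d}{\d s}\bigl[G_{g\partial}(s)f\bigr](x) = g\bigl(h^{-1}(h(x)+s)\bigr)\,f'\bigl(h^{-1}(h(x)+s)\bigr) = \bigl[G_{g\partial}(s)(g\partial f)\bigr](x).
\]
Since $g\partial f = gf'\in\mathscr{C}_c(\R)\subset L^2(\R)$ and the group is strongly continuous by \autoref{lem:A1}, integration over $[0,t]$ produces the $L^2$-valued Bochner identity $G_{g\partial}(t)f - f = \int_0^t G_{g\partial}(s)(g\partial f)\d s$; dividing by $t$ and letting $t\to 0$ gives $f\in\Dom A$ and $Af = g\partial f$.

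Next, since $g$ is bounded away from zero and from infinity, the graph norm of $g\partial$ on $W^{1,2}(\R)$ is equivalent to the usual $W^{1,2}$-norm, so $\mathscr{C}_c^\infty(\R)\subset\mathscr{C}_c^1(\R)$ is a core for $g\partial$. As $A$ is closed (being a generator) and coincides with $g\partial$ on this core, the inclusion $A\supseteq g\partial$ on $W^{1,2}(\R)$ follows.

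For the reverse inclusion $\Dom A\subseteq W^{1,2}(\R)$, I would invoke the resolvent. The uniform bound $\|G_{g\partial}(t)\|_{\mathscr{L}(L^2(\R))}\leq \sqrt{c_2/c_1}$ for $t\in\R$ from the proof of \autoref{lem:A1} places every $\lambda$ with $\Re\lambda>0$ in $\rho(A)$, with $R(\lambda:A)k = \int_0^\infty \mathrm{e}^{-\lambda t}G_{g\partial}(t)k\d t$. Performing the substitution $y=h^{-1}(h(x)+t)$ (so that $\d t = \d y/g(y)$) inside the integral yields the explicit formula
\[
	[R(\lambda : A)k](x) = \mathrm{e}^{\lambda h(x)}\int_x^\infty \mathrm{e}^{-\lambda h(y)}\,\frac{k(y)}{g(y)}\d y, \quad x\in\R,
\]
and direct differentiation gives $(R(\lambda : A)k)'(x) = \bigl(\lambda[R(\lambda:A)k](x)-k(x)\bigr)/g(x)$, which lies in $L^2(\R)$ since $g\geq c_1>0$ and both $R(\lambda:A)k$ and $k$ belong to $L^2(\R)$. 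Hence $\Dom A = R(\lambda:A)L^2(\R)\subseteq W^{1,2}(\R)$, completing the identification $A = g\partial$. The main technical step is the explicit resolvent computation in this final paragraph, but it reduces to precisely the change of variables already used to bound the group in \autoref{lem:A1}, so the argument is quite short once the pointwise differential identity in the first paragraph is available.
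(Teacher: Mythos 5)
Your proof is correct, but it takes a genuinely different route from the paper's. The paper works directly from the definition of the generator: it takes $u\in\Dom A$ with $\lim_{s\downarrow 0}(G_{g\partial}(s)u-u)/s=z$, applies the difference quotient to the auxiliary element $F=\int_0^a G_{g\partial}(r)u\,\d r$, computes the limit in two ways (once via the integrand, once via Lebesgue's differentiation theorem after a change of variables), and arrives at the identity $u(y)-u(x)=\int_x^y z(r)/g(r)\,\d r$, which simultaneously shows $u\in W^{1,2}(\R)$ and $z=gu'$; this is elementary and needs no resolvent theory, but as written it only establishes the inclusion $A\subseteq g\partial$. Your argument instead proves the two inclusions separately: the pointwise identity $\frac{\d}{\d s}[G_{g\partial}(s)f]=[G_{g\partial}(s)(g\partial f)]$ on $\mathscr{C}_c^1(\R)$ plus the core argument (using that the graph norm of $g\partial$ is equivalent to the $W^{1,2}$-norm because $c_1\leq g\leq c_2$) gives $g\partial\subseteq A$, and the explicit Laplace-transform formula for $R(\lambda:A)$ together with the substitution $y=h^{-1}(h(x)+t)$ gives $\Dom A=R(\lambda:A)L^2(\R)\subseteq W^{1,2}(\R)$ and hence $A\subseteq g\partial$. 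What your approach buys is that both inclusions are made fully explicit (the paper's converse inclusion $W^{1,2}(\R)\subseteq\Dom A$ is left implicit), at the cost of invoking the Hille--Yosida integral representation of the resolvent. The only points worth tightening are routine: the identification of the $L^2$-valued Bochner integrals with the corresponding pointwise integrals (both in the first paragraph and in the resolvent computation) should be justified, e.g.\ by first taking $k\in\mathscr{C}_c(\R)$ and passing to the limit, and the absolute convergence of $\int_x^\infty \mathrm{e}^{-\lambda h(y)}k(y)/g(y)\,\d y$ should be noted (it follows from $h'(y)=1/g(y)\geq 1/c_2$, which forces exponential decay of $\mathrm{e}^{-\Re\lambda\, h(y)}$, combined with the Cauchy--Schwarz inequality).
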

\begin{proof}
Let $u\in L^2(\R)$ and assume that there exists $z\in L^2(\R)$ such that 
	\begin{equation*}
		\lim_{s\downarrow 0}\left\|\frac{G_{g\partial}(s)u - u}{s} -z \right\|_{L^2(\R)}=0.
	\end{equation*}
Let $a\in\R$ be arbitrary and define a function $F:\R\rightarrow\R$ by
	\begin{equation*}	
		F(t):= \int_0^a[G_{g\partial}(r)u](t)\d{r}.
	\end{equation*}
Assume, for simplicity, that $a>0$. The case $a<0$ can be done in a very similar manner. By using the group property of $G_{g\partial}$, we have for $0<s<a$ that
	\begin{equation}
	\label{eq:generator_1}
		\frac{1}{s}\left( G_{g\partial}(s)F-F\right) = \frac{1}{s}\int_0^aG_{g\partial}(s+ r)u\d{r} - \frac{1}{s}\int_0^a G_{g\partial}(r)u\d{r}.
	\end{equation}
Consequently, we obtain that
	\begin{equation*}
		\frac{1}{s}\left( G_{g\partial}(s)F-F\right) = \int_0^a G_{g\partial}(r)\left[\frac{G_{g\partial}(s)u-u}{s} \right]\d{r} \qquad \overset{L^2(\R)}{\underset{s\downarrow 0}{\longrightarrow}} \qquad \int_0^a G_{g\partial}(r)z\d{r}
	\end{equation*}
where the convergence follows by the dominated convergence theorem and estimate \eqref{eq:estimate_group}. For this limiting function, there is the expression
	\begin{equation*}
		\int_0^a [G_{g\partial}(r)z](x)\d{r} = \int_0^a z\left(h^{-1}\left(h(x)+r\right)\right)\d{r} = \int_x^{h^{-1}(h(x)+a)} \frac{z(r)}{g(r)}\d{r}, \quad x\in\R. 
	\end{equation*}
On the other hand, we also have from equation \eqref{eq:generator_1} that
	\begin{align*}
		\frac{1}{s}\left( G_{g\partial}(s)F-F\right) &  = \frac{1}{s} \int_{s}^{s+a} G_{g\partial}(r)u\d{r} -\frac{1}{s}\int_0^aG_{g\partial}(r)u\d{r}\\
			& = \int_a^{a+s}G_{g\partial}(r)u\d{r} - \frac{1}{s}\int_0^sG_{g\partial}(r)u\d{r}
	\end{align*}
By strong continuity of the group $G_{g\partial}$ and Lebesgue's differentiation theorem, it follows that the right-hand side of the above equality tends to 	
	\begin{equation*}
		u\left(h^{-1}(h(\cdot)+a)\right) - u(\cdot)
	\end{equation*}
in $L^2(\R)$ as $s\downarrow 0$. Consequently, it follows that the inequality
	\begin{equation}
	\label{eq:generator_2}
		u\left(h^{-1}(h(x)+a)\right) - u(x) = \int_x^{h^{-1}(h(x)+a)}\frac{z(r)}{g(r)}\d{r}
	\end{equation}
is satisfied with almost every $x\in\R$. Finally, we have that for every $x, y\in \R$ there exists $a\in\R$ such that $y=h^{-1}(h(x)+a$ and therefore, possibly after modifying $u$ on a null set, we have that the equation 
	\begin{equation*}
		u(y)-u(x) = \int_x^y \frac{z(r)}{g(r)}\d{r}
	\end{equation*}
is satisfied for almost every $x,y\in\R$ by equality \eqref{eq:generator_2} which shows that $u\in W^{1,2}(\R)$ and it holds that $z=gu' = (g\partial)f$.
\end{proof}
\end{appendices}

\end{document}